\theoremstyle{plain}
\newtheorem{lemma}{Lemma}[section]
\newtheorem{theorem}[lemma]{Theorem}
\newtheorem{corollary}[lemma]{Corollary}
\newtheorem{claim}{Claim}
\newtheorem*{stat}{\name}
\newcommand{\name}{testing}
\theoremstyle{definition}
\newtheorem{definition}[lemma]{Definition}
\theoremstyle{remark}
\newtheorem{remark}[lemma]{Remark}
\newtheorem{notation}[lemma]{Notation}
\newtheorem{notations}[lemma]{Notations}
\newtheorem*{remark*}{Remark}
\newenvironment{all}[1]{\renewcommand{\name}{#1}\begin{stat}}
                         {\end{stat}}
\newcommand{\qedc}{{\qed}~{\rm Claim~{\theclaim}.}}
\newcommand{\qedsc}{{\qed}~{\rm Claim.}}
\newenvironment{cproof}
{\begin{proof}[Proof of Claim.]}
{\qedc\renewcommand{\qed}{}\end{proof}}
\numberwithin{equation}{section}
\newcommand{\set}[1]{\{#1\}}
\newcommand{\setm}[2]{\set{#1\mid#2}}
\newcommand{\famm}[2]{(#1\mid#2)}
\newcommand{\Sor}{\mathbin{\bigtriangledown}}
\newcommand{\Pow}{\mathfrak{P}}
\newcommand{\dnw}{\mathbin{\downarrow}}
\newcommand{\upw}{\mathbin{\uparrow}}
\newcommand{\Upw}{\mathbin{\Uparrow}}
\newcommand{\scL}{\mathscr{L}}
\newcommand{\cA}{\mathcal{A}}
\newcommand{\cB}{\mathcal{B}}
\newcommand{\cP}{\mathcal{P}}
\newcommand{\cS}{\mathcal{S}}
\newcommand{\cV}{\mathcal{V}}
\newcommand{\cW}{\mathcal{W}}
\newcommand{\into}{\hookrightarrow}
\newcommand{\zero}{\mathbf{0}}
\newcommand{\two}{\mathbf{2}}
\newcommand{\bu}{\boldsymbol{u}}
\newcommand{\bA}{\boldsymbol{A}}
\newcommand{\bB}{\boldsymbol{B}}
\newcommand{\xF}{\mathbf{F}}
\newcommand{\rF}{\mathrm{F}}
\newcommand{\bU}{\boldsymbol{U}}
\newcommand{\id}{\mathrm{id}}
\newcommand{\jz}{$(\vee,0)$}
\newcommand{\jzs}{\jz-semi\-lat\-tice}
\newcommand{\jzh}{\jz-ho\-mo\-mor\-phism}
\newcommand{\res}{\mathbin{\restriction}}
\DeclareMathOperator{\card}{card}
\DeclareMathOperator{\crita}{crit}
\DeclareMathOperator{\critar}{crit_r}
\newcommand{\crit}[2]{\crita({{#1};{#2}})}
\newcommand{\critr}[2]{\critar({{#1};{#2}})}
\newcommand{\module}[1]{|#1|}
\DeclareMathOperator{\Ids}{Id_s}
\DeclareMathOperator{\Fr}{Fr}
\DeclareMathOperator{\At}{At}
\DeclareMathOperator{\dom}{dom}
\DeclareMathOperator{\Con}{Con}
\DeclareMathOperator{\Conc}{Con_c}
\DeclareMathOperator{\Concr}{Con_{c,r}}
\newcommand{\ConV}{\operatorname{\Con^{\cV}}}
\newcommand{\ConcV}{\operatorname{\Con_{\mathrm{c}}^{\cV}}}
\newcommand{\ConcW}{\operatorname{\Con_{\mathrm{c}}^{\cW}}}
\DeclareMathOperator{\Id}{Id}
\newcommand{\tosurj}{\mathbin{\twoheadrightarrow}}
\newcommand{\toinj}{\mathbin{\hookrightarrow}}
\newcommand{\Bool}{\mathbf{Bool}}
\begin{document}

\title[Critical points]{The possible values of critical points between strongly congruence-proper varieties of algebras}

\author[P.~Gillibert]{Pierre Gillibert}
\address{Charles University in Prague, Faculty of Mathematics and Physics, Department of Algebra, Sokolovsk\'a 83, 186 00 Prague, Czech Republic.}
\address{Centro de \'Algebra da Universidade de Lisboa, Av. Prof. Gama Pinto 2, 1649-003 Lisboa, Portugal}
\email{gilliber@karlin.mff.cuni.cz, pgillibert@yahoo.fr}

\subjclass[2000]{06B10, 03C05}

\keywords{Variety of algebras, compact, congruence, critical point, condensate, lifter, Armature Lemma}
\thanks{This work was partially supported by the institutional grant MSM 0021620839}

\date{\today}

\begin{abstract}
We denote by~$\Conc A$ the \jzs\ of all finitely generated congruences of an (universal) algebra~$A$, and we define~$\Conc\cV$ as the class of all isomorphic copies of all $\Conc A$, for $A\in\cV$, for any variety~$\cV$ of algebras.

Let~$\cV$ and~$\cW$ be locally finite varieties of algebras such that for each finite algebra $A\in\cV$ there are, up to isomorphism, only finitely many $B\in\cW$ such that $\Conc A\cong\Conc B$, and every such~$B$ is finite. If $\Conc\cV\not\subseteq\Conc\cW$, then there exists a \jzs\ of cardinality~$\aleph_2$ in $(\Conc\cV)-(\Conc\cW)$. Our result extends to quasivarieties of first-order structures, with finitely many relation symbols, and relative congruence lattices.

In particular, if~$\cW$ is a finitely generated variety of algebras, then this occurs in case~$\cW$ omits the tame congruence theory types $\boldsymbol{1}$ and $\boldsymbol{5}$; which, in turn, occurs in case~$\cW$ satisfies a nontrivial congruence identity.

The bound~$\aleph_2$ is sharp.
\end{abstract}

\maketitle

\section{Introduction}\label{S:Intro}
Why do so many representation problems in algebra, enjoying positive solutions in the finite case, have counterexamples of minimal cardinality either~$\aleph_0$, $\aleph_1$, or~$\aleph_2$, and no other cardinality? By a \emph{representation problem}, we mean that we are given \emph{categories}~$\cA$ and~$\cB$ together with a \emph{functor} $\Phi\colon\cA\to\cB$, and we are trying to determine whether an object~$B$ of~$\cB$ is isomorphic to~$\Phi(A)$ for some object~$A$ of~$\cA$. We are also given a mapping from the objects of~$\cB$ to the cardinals, that behaves like the cardinality mapping on sets.

Examples of such representation problems cover various fields of mathematics. Here are a few examples, among many:
\begin{itemize}
\item Every (at most) countable Boolean algebra is generated by a chain (cf. \cite[Theorem~172]{LTF}), but not every Boolean algebra is generated by a chain (cf. \cite[Lemma~179]{LTF}). It is an easy exercise to verify that in fact, every subchain~$C$ of the free Boolean algebra~$F$ on~$\aleph_1$ generators is countable, thus~$F$ cannot be generated by~$C$.

\item Every dimension group with at most~$\aleph_1$ elements is isomorphic to $K_0(R)$ for some (von~Neumann) regular ring~$R$ (cf. \cite{Elli76,GoHa86}), but there is a dimension group with~$\aleph_2$ elements which is not isomorphic to~$K_0(R)$ for any regular ring~$R$ (cf.~\cite{NonMeas}).

\item Every distributive algebraic lattice with at most~$\aleph_1$ compact elements is isomorphic to the congruence lattice of some lattice (cf. \cite{H1,H2,H3}), but not every distributive algebraic lattice is isomorphic to the congruence lattice of some lattice (cf.~\cite{CLP}); the minimal number of compact elements in a counterexample, namely~$\aleph_2$, is obtained in~\cite{R}.
\end{itemize}

In an earlier paper \cite{G1}, we introduced a particular case of the kind of representation problem considered above, concentrated in the notion of \emph{critical point} between two varieties of (universal) algebras. It turned out that this notion often behaves as a paradigm for those kinds of problems. The present paper will be centered on that paradigm, and will offer an explanation, in that context, why for so many representation problems, the minimal size of a counterexample (if it exists at all) lies below~$\aleph_2$. Although initially stated for universal algebras, the method of proof of our main result (Theorem~\ref{T:poscritVar}) carries a potential of generalization to many other contexts, starting with Theorem~\ref{T:poscritQVar}.

Let us be a bit more precise.
For an algebra~$A$ we denote by $\Conc A$ the \jzs\ of all compact (i.e., finitely generated) congruences of~$A$. A \emph{lifting} of a \jzs\ $S$ is an algebra~$A$ such that $\Conc A\cong S$.
For a variety~$\cV$ of algebras we denote by $\Conc\cV$ the class of all \jzs s with a lifting in~$\cV$.

For varieties~$\cV$ and~$\cW$ of algebras, the \emph{critical point} between~$\cV$ and~$\cW$, denoted by $\crit{\cV}{\cW}$, is the smallest cardinality of a member of $(\Conc\cV)-(\Conc\cW)$ if $\Conc\cV\not\subseteq\Conc\cW$, and~$\infty$ otherwise (cf. \cite{G1, CLPSurv}).

The critical point between two varieties can be anything we like. For example, for (possibly infinite) fields~$F$ and~$K$ with $\card F<\card K$, it is easy to verify that
 \[
 \crit{F\text{-vector spaces}}{K\text{-vector spaces}}=\card F+1\,.
 \]
On the other hand, once we fix restrictions on the \emph{similarity types} of our algebras, the situation becomes much more interesting.
All known critical points between varieties of algebras, with a countable similarity type, are either $\le\aleph_2$ or equal to~$\infty$. For example, it is proved in~\cite{RTW} that
 \[
 \crit{\text{lattices}}{\text{groups}}=\crit{\text{lattices}}{\text{rings}}=\aleph_2\,.
 \]
It is also easily seen that $\crit{\text{groups}}{\text{lattices}}=5$. Plo\v s\v cica in \cite{Ploscica09}, using methods introduced by Wehrung in~\cite{CLP} and R\r{u}\v{z}i\v{c}ka in~\cite{R}, finds a majority algebra~$M$ of cardinality~$\aleph_2$ such that $\Conc M$ has no lifting by any lattice. However, every distributive \jzs\ of cardinality $\le\aleph_1$ is liftable by a lattice (cf. \cite{H1,H2,H3}), therefore the critical point between the variety of all majority algebras and the variety of all lattices is~$\aleph_2$.

A strong restriction on the possible values of critical points between finitely generated congruence-distributive varieties of algebras is brought by the following dichotomy result proved in \cite{G1}.

\begin{theorem}\label{T:d1}
Let~$\cV$ be a locally finite variety of algebras and let~$\cW$ be a finitely generated congruence-distributive variety of algebras. If $\Conc\cV\not\subseteq\Conc\cW$, then $\crit{\cV}{\cW}<\aleph_\omega$.
\end{theorem}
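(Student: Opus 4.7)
The plan is to argue by contradiction: assume $\Conc\cV\not\subseteq\Conc\cW$ and $\crit{\cV}{\cW}\geq\aleph_{\omega}$, and exhibit a member of $(\Conc\cV)-(\Conc\cW)$ of cardinality strictly below~$\aleph_{\omega}$.

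The first ingredient comes from J\'onsson's Lemma applied to the finitely generated congruence-distributive variety~$\cW$: there is a finite list $S_{1},\dots,S_{k}$ of finite subdirectly irreducible members of~$\cW$, and every $B\in\cW$ is a subdirect product of quotients of~$B$ embedding into the~$S_{i}$'s. Consequently, $\Conc B$ is rigidly controlled by the finite set $\bigcup_{i}\mathrm{Hom}(B,S_{i})$. I plan to exploit this rigidity to extract from a single non-liftable algebra $A\in\cV$ a \emph{finite} combinatorial obstruction: since~$\cV$ is locally finite, $A$ is the directed colimit of its finite subalgebras, and a compactness-style argument that pigeonholes the possible coherent families of $\cW$-liftings through the finite SI-spectrum should produce a finite poset~$P$ and a diagram $\vec{A}\colon P\to\cV$ of finite algebras such that the induced diagram $\Conc\vec{A}$ admits no lifting by any diagram in~$\cW$.

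Next, I would invoke the lifter/condensate machinery (the reason those terms appear among the keywords). For each positive integer~$n$, a suitable lifter gives a norm-covering $(X,\vec{X})$ of~$P$ of cardinality at most~$\aleph_{n}$, and with it a condensate $\mathbf{F}(X)\otimes\vec{A}$: a single algebra in~$\cV$ of size $\leq\aleph_{n}$ whose compact congruence lattice is a canonical combinatorial gluing, indexed by~$X$, of the finite lattices $\Conc\vec{A}(p)$. The \emph{Armature Lemma} then asserts that any hypothetical lifting of $\mathbf{F}(X)\otimes\vec{A}$ by some $B\in\cW$ forces, via pigeonhole over the finite SI-spectrum of~$\cW$, the existence of a coherent lifting of the original diagram $\vec{A}$ in~$\cW$. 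This contradicts the choice of $\vec{A}$; hence $\Conc(\mathbf{F}(X)\otimes\vec{A})$ lies in $\Conc\cV-\Conc\cW$ and has cardinality at most~$\aleph_{n}$. Taking~$n$ large enough in terms of~$|P|$ and~$k$ (both finite) closes the contradiction.

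The main obstacle is the Armature Lemma: the lifter~$X$ must be engineered so that, no matter how a $\cW$-lifting~$B$ assigns SI-quotient types over the many copies of~$P$ that~$X$ packs together, the pigeonhole forces a ``monochromatic'' sub-copy of~$P$ on which the induced diagram is a bona fide lifting of~$\vec{A}$. Balancing the combinatorial complexity of~$X$ against the finite count~$k$ of SI's is what pushes the bound up to~$\aleph_{n}$, while keeping it strictly below~$\aleph_{\omega}$. Congruence-distributivity is used twice: to apply J\'onsson's Lemma and pin down the finite SI-spectrum, and to guarantee that local congruence data on the SI-quotients of~$B$ reassemble into a well-defined lifting diagram without unpredictable cross-interference.
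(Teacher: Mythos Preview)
This theorem is not proved in the present paper; it is quoted from~\cite{G1}. Your outline is essentially the argument of~\cite{G1}, and it is correct in structure: J\'onsson's Lemma yields strong congruence-properness of~$\cW$; the contrapositive of the Compactness Lemma of \cite[Section~4-9]{GiWe1} turns a non-liftable $A\in\cV$ (written as a directed colimit of finite subalgebras) into a finite diagram~$\vec A$, indexed by a finite lattice~$P$, whose $\Conc$-image has no $\cW$-lifting; a condensate $\xF(U)\otimes\vec A$ over an $\aleph_0$-lifter~$U$ of~$P$ produces an algebra in~$\cV$ of size~$\aleph_n$ for some finite~$n$; and the Armature Lemma shows that any $\cW$-lifting of this condensate would yield a $\cW$-lifting of~$\vec A$, a contradiction.

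One correction worth making. The ``pigeonhole'' in the Armature Lemma is \emph{not} over the SI-spectrum of~$\cW$; it is a purely combinatorial property built into the lifter~$(U,\bU)$, and on the $\cW$-side it uses only that each $\Conc G_u$ is finite for suitably chosen finite subalgebras~$G_u$ of the putative lifting~$B$ (this is where local finiteness of~$\cW$ enters, see the proof of Lemma~\ref{L:MainTech} in the present paper for the analogous step). J\'onsson's Lemma is used exactly once, to obtain strong congruence-properness: for each finite $A_i$ there are only finitely many $B\in\cW$ with $\Conc B\cong\Conc A_i$, all finite. Your second claimed use of congruence-distributivity (``local congruence data on SI-quotients reassemble without cross-interference'') is not a separate ingredient of the proof. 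Finally, note that the existence of an $\aleph_0$-lifter of a given finite lattice~$P$, of cardinality some~$\aleph_n$ with~$n$ finite, is itself a nontrivial combinatorial result to be cited from~\cite{G1} or~\cite{GiWe1}; the dependence of~$n$ on~$P$ is exactly why the conclusion is $\crit{\cV}{\cW}<\aleph_\omega$ rather than a fixed~$\aleph_n$, and sharpening this to~$\aleph_2$ is precisely the contribution of the present paper (Theorem~\ref{T:poscritVar}).
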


Critical points between varieties of lattices have been particularly studied. Tools for proving the countability of certain critical points are given, along with examples, in \cite{Ploscica03b, Ploscica04, G2}. Examples of varieties of lattices with critical point~$\aleph_2$ are given in \cite{Ploscica00, Ploscica03, G2}. In \cite{G1} we give two finitely generated varieties of modular lattices with critical point~$\aleph_1$, solving a problem by T\r{u}ma and Wehrung in~\cite{CLPSurv}. In \cite{G4} we establish the following dichotomy result for varieties of lattices.

\begin{theorem}\label{T:d2}
Let~$\cV$ and~$\cW$ be varieties of lattices such that every simple member of~$\cW$ contains a prime interval. If $\Conc\cV\not\subseteq\Conc\cW$, then $\crit{\cV}{\cW}\le\aleph_2$. Moreover, $\Conc\cV\subseteq\Conc\cW$ if and only if~$\cV$ is contained in either~$\cW$ or its dual.
\end{theorem}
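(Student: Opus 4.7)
The plan is first to reduce the cardinality bound to the \emph{moreover} statement: once we establish that $\Conc\cV\subseteq\Conc\cW$ iff $\cV\subseteq\cW$ or $\cV\subseteq\dual{\cW}$, the bound $\crit{\cV}{\cW}\le\aleph_2$ will fall out of whatever construction witnesses the contrapositive. For the easy direction of the \emph{moreover} claim, note that for any lattice~$L$ the congruences of~$L$ coincide with the congruences of its order-dual $\dual{L}$, since a lattice congruence is a set-theoretic equivalence compatible with both $\vee$ and $\wedge$, which are merely swapped in the dual. Hence $\Conc L=\Conc \dual{L}$, so $\Conc\cW=\Conc \dual{\cW}$, and $\cV\subseteq\cW$ or $\cV\subseteq\dual{\cW}$ implies $\Conc\cV\subseteq\Conc\cW$.

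For the hard direction I would argue by contrapositive: assume $\cV\not\subseteq\cW$ and $\cV\not\subseteq\dual{\cW}$, and exhibit a member of $(\Conc\cV)-(\Conc\cW)$ of cardinality $\aleph_2$. Pick lattices $S\in\cV-\cW$ and $T\in\cV-\dual{\cW}$; by Jónsson's lemma one may take them subdirectly irreducible, and by a local-finiteness/compactness argument they may be assumed finite after passing to suitable subalgebras and quotients. From~$S$ and~$T$ I would build a finite diagram $\bA$ of lattices in~$\cV$, together with a $\aleph_2$-sized lifter $\bX$ as in the condensate machinery of the present paper, so that the condensate $\xF(\bX)\otimes\bA$ is a lattice in~$\cV$ whose $\Conc$ admits no lifting in~$\cW$.

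The main obstacle, and where the prime-interval hypothesis enters, is proving the non-liftability in~$\cW$. The strategy is to apply the Armature Lemma to a hypothetical lifting $L\in\cW$ of $\Conc(\xF(\bX)\otimes\bA)$; this produces a family of lattice homomorphisms out of the diagram factors of~$\bA$ into subquotients of~$L$ that must jointly accommodate~$S$ at certain coordinates and~$T$ at others. The assumption that every simple member of~$\cW$ contains a prime interval is precisely what excludes the pathological subdirectly irreducibles (of the kind appearing in free lattices or certain infinite simple lattices) that would otherwise make such a joint accommodation possible. Applying Kuratowski's free-set theorem at~$\aleph_2$ to the resulting coloring of the lifter then yields a homogeneous configuration forcing one of two alternatives: either~$S$ is a homomorphic image of a sublattice of some member of~$\cW$, contradicting $S\notin\cW$ (since varieties are closed under $\mathbf{H},\mathbf{S}$), or dually~$T$ appears in $\dual{\cW}$, contradicting $T\notin\dual{\cW}$.

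The hardest step is engineering the diagram $\bA$ together with the coloring so that any lifting in~$\cW$ is genuinely forced into this dichotomy, and simultaneously checking that $\aleph_2$ is the first cardinal where Kuratowski's combinatorics bite — the latter point being also why the bound is sharp. Controlling simultaneously a ``left-handed'' obstruction coming from~$S$ and a ``right-handed'' one coming from~$T$ through a single condensate is what makes the dual $\dual{\cW}$ appear naturally in the statement, and it is precisely the prime-interval hypothesis on simples of~$\cW$ (and hence, by the same argument applied to $\dual{\cW}$, of $\dual{\cW}$) that prevents a lattice in~$\cW$ from secretly mimicking both~$S$ and~$T$ at once. Once the $\aleph_2$ counterexample is produced, $\crit{\cV}{\cW}\le\aleph_2$ is immediate from the definition.
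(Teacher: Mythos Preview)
The statement you are attempting to prove is not proved in the present paper at all: Theorem~\ref{T:d2} is quoted in the Introduction as a result established in the separate preprint~\cite{G4}, and no argument for it is given here. So there is no proof in this paper to compare your proposal against.

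On the substance of your sketch: the easy direction of the \emph{moreover} claim is correct as you state it. The overall strategy for the hard direction --- build a finite diagram in~$\cV$ from witnesses $S\in\cV\setminus\cW$ and $T\in\cV\setminus\dual{\cW}$, take an $\aleph_2$-condensate, and use the Armature Lemma to force a dichotomy in any putative $\cW$-lifting --- is indeed the shape of the argument in~\cite{G4}. However, your reduction ``by a local-finiteness/compactness argument they may be assumed finite'' is a genuine gap. Theorem~\ref{T:d2} carries no local-finiteness hypothesis on either variety; the present paper even stresses, immediately after stating Theorem~\ref{T:poscritVar}, that this is what distinguishes Theorem~\ref{T:d2} from the main result here. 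J\'onsson's Lemma yields subdirectly irreducible witnesses, but not finite ones, and the condensate machinery as set up in this paper takes diagrams of \emph{finite} algebras as input. Getting around this in~\cite{G4} requires lattice-specific work (in particular, this is where the prime-interval hypothesis on simple members of~$\cW$ does real work, not merely to ``exclude pathological subdirectly irreducibles'' as you put it), and your outline does not indicate how you would bridge that gap. As written, the proposal is a reasonable high-level plan but not yet a proof.
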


In particular, this solves the finitely generated case of (the correct recasting of) the critical point conjecture for lattices (cf. \cite[Problem~5]{CLPSurv}). However the conjecture can be generalized to arbitrary finitely generated varieties of algebras. This generalization is still open (cf. \cite[Problem~3]{GiWe1}).

A variety~$\cV$ of algebras is \emph{strongly congruence-proper} if every finite \jzs\ has, up to isomorphism, only finitely many liftings in~$\cV$ and every such lifting is finite (cf.~\cite{GiWe1}). As a consequence of \cite[Theorem 10.16]{FM}, a finitely generated variety of congruence-modular algebras, with finite similarity type, is strongly congruence-proper. As observed in \cite[Section~4-10]{GiWe1}, it follows from \cite[Theorem~14.6]{HM} that a finitely generated variety~$\cV$ of algebras, with finite similarity type, that omits tame congruence theory types $\boldsymbol{1}$ and $\boldsymbol{5}$, is strongly congruence-proper. This, in turn, occurs in case~$\cV$ satisfies a nontrivial congruence identity (cf. \cite[Theorem~9.18]{HM}). In particular, this holds for varieties of \emph{lattices}, \emph{groups}, \emph{modules} (over finite rings), \emph{rings}.

Theorem~\ref{T:d1} is generalized in \cite{GiWe1} to strongly congruence-proper varieties of algebras. The aim of the present paper is to improve the bound from~$<\aleph_\omega$ to~$\le\aleph_2$. This solves, in particular, the generalization of the critical point conjecture to the congruence-modular case.

Our main result is the following.

\begin{all}{Theorem~\ref{T:poscritVar}}
Let~$\cV$ and~$\cW$ be locally finite varieties of algebras. Assume that for each finite algebra $A\in\cV$ there are, up to isomorphism, only finitely many $B\in\cW$ such that $\Conc A\cong\Conc B$, and every such~$B$ is finite. Then either $\crit{\cV}{\cW}\le\aleph_2$ or $\Conc\cV\subseteq\Conc\cW$.
\end{all}

In particular, the theorem applies to the case where $\cW$ is strongly congruence-proper (cf. Corollary~\ref{C:poscrit}). The bound~$\aleph_2$ is optimal since there are finitely generated varieties of lattices (hence strongly congruence-proper) with critical point~$\aleph_2$. While Theorem~\ref{T:d2} requires no assumption of either variety~$\cV$ or~$\cW$ be locally finite, we need that assumption for both~$\cV$ and~$\cW$ in the statement of Theorem~\ref{T:poscritVar}. Not every \jzs\ is isomorphic to $\Conc A$ for a locally finite algebra (cf.~\cite{Kear}): this suggests that there is still way to go.

In Section~\ref{S:QVar}, we show how to extend our main result from \emph{varieties of algebras} to \emph{quasivarieties of first-order structures} (for which a most notable example is given by \emph{quasivarieties of graphs}, see~\cite{Gorb}). Theorem~\ref{T:poscritVar} extends, \emph{mutatis mutandis}, to Theorem~\ref{T:poscritQVar}, by assuming finiteness of the set of relation symbols and changing congruences to \emph{relative congruences}.

\section{Basic Concepts}\label{S:Basic}

We denote by $\dom f$ the domain of a function $f$. Given sets $X$ and~$Y$ we denote by $X-Y=\setm{x\in X}{x\not\in Y}$ the set-theoretical difference of $X$ and~$Y$, and by $X^Y$ the set of all maps $f\colon Y\to X$. For a variety~$\cV$ of algebras and a set $X$ we denote by $\Fr_\cV(X)$ the free algebra on~$X$ in~$\cV$.

We denote by $0$ (resp., $1$) the least (resp., largest) element of a poset if it exists. We denote by $\two=\set{0,1}$ the two-element Boolean algebra. We only consider non-empty algebras. We denote by $\zero_A$ the smallest congruence of an algebra~$A$. A \jzh\ $\varphi\colon S\to T$ \emph{separates zero} if $\varphi(x)=0$ implies that $x=0$ for each $x\in S$. Notice that a morphism~$f$ of algebras is an embedding if and only if $\Conc f$ separates zero. In particular if~$f$ and~$g$ are morphisms of algebras and there is a natural equivalence $\Conc f\cong\Conc g$, then~$f$ is an embedding if and only if~$g$ is an embedding.

Let $x<y$ in a poset~$P$, we write $x\prec y$, or, equivalently, $y\succ x$, if there is no $t\in P$ with $x<t<y$. Assume that~$P$ has $0$. An atom of~$P$ is an element $p\in P$ such that $p\succ 0$. We denote by $\At P$ the set of all atoms of $P$.

Let $X$ be a subset of a poset~$P$. We denote
\begin{align*}
P\dnw X &=\setm{p\in P}{(\exists x\in X)(p\le x)}\,,\\
P\upw X &=\setm{p\in P}{(\exists x\in X)(x\le p)}\,,\\
P\Upw X &=\setm{p\in P}{(\forall x\in X)(x\le p)}\,.
\end{align*}
A subset $Q$ of~$P$ is a \emph{lower subset of~$P$} (resp., \emph{upper subset of~$P$}) if $Q=P\dnw Q$ (resp., $Q=P\upw Q$). An upper subset $Q$ of~$P$ is \emph{finitely generated} if $Q = P \upw X$ for some finite subset $X$ of~$P$. An \emph{ideal} of a poset~$P$ is a lower subset $I$ of~$P$ such that for all $x,y\in I$ there is $z\in I$ such that $z\ge x$ and $z\ge y$. An ideal $I$ of~$P$ is \emph{principal} if $I=P\dnw\set{x}$ for some $x\in P$. We say that~$P$ is \emph{lower finite} if every principal ideal of~$P$ is finite.

Let $\theta$ be a congruence of an algebra~$A$. For an element~$a$ of~$A$, we denote by $a/\theta$ the equivalence class of~$a$. For a subset $X$ of~$A$ we set $X/\theta=\setm{x/\theta}{x\in X}$. If $X$ is a subalgebra of~$A$ then $X/\theta$ is a subalgebra of $A/\theta$, moreover $\theta\cap X^2$ is a congruence of $X$. We shall often identify $X/(\theta\cap X^2)$ and $X/\theta$. Given a morphism $f\colon A\to B$ of algebras, the \emph{kernel} of $f$ is $\ker f=\setm{(x,y)\in A^2}{f(x)=f(y)}$. Notice that $\ker f$ is a congruence of~$A$.

Given algebras $A$ and $B$ and $\varphi\colon\Conc A\to\Conc B$ a \jzh, we identify $\varphi$ with its natural extension $\Con A\to\Con B$. That is, given $\theta\in\Con A$, we set $\varphi(\theta)=\bigvee \setm{\varphi(\alpha)}{\alpha\in \Conc A\text{ and }\alpha\le\theta}$.

Cardinals are initial ordinals, in particular a cardinal is identified with a set. We denote by $\Pow(X)$ the set of all subsets of $X$. For a cardinal~$\kappa$, we put
\begin{align*}
[X]^\kappa&=\setm{Y\in\Pow(X)}{\card Y=\kappa}.
\end{align*}

By a \emph{diagram} in a category~$\cS$, we mean a functor from a poset, viewed as a category in the usual way (i.e., there is an arrow from~$p$ to~$q$ if{f} $p\le q$, and then the arrow is unique), to~$\cS$. Hence a $P$-indexed diagram in~$\cS$ is identified with a system $\famm{S_p,\sigma_p^q}{p\leq q\text{ in }P}$ such as $\sigma_p^q\colon S_p\to S_q$ in~$\cS$, $\sigma_p^p=\id_{S_p}$, and $\sigma_p^r=\sigma_q^r\circ\sigma_p^q$, for all $p\le q\le r$ in~$P$.

For an object~$S$ of~$\cS$, the \emph{comma category}, denoted by $\cS\dnw S$, is the category whose objects are the morphisms $f\colon A\to S$ where~$A$ is an object of $\cS$ and the morphisms from $f\colon A\to S$ to $g\colon B\to S$ are the morphisms $h\colon A\to B$ of $\cS$ such that $g\circ h=f$.

Denote epimorphisms by $f\colon A\tosurj B$. Given morphisms $f\colon A\to B$ and $g\colon A\to C$ of algebras, we say that $f$ \emph{factors through} $g$ if there exists $h\colon C\to B$ such that $f=h\circ g$. If $g$ is an epimorphism then the map $h$ is unique.

\section{The condensate construction}\label{S:DefCond}

The proof of the dichotomy result (cf. Theorem~\ref{T:poscritVar}) relies on the \emph{condensate} construction introduced in \cite[Section~3-1]{GiWe1}and the \emph{Armature Lemma} (cf. \cite[Section~3-2]{GiWe1}). In order to ease the understanding of certain crucial parts of our paper, we shall recall the main lines of that construction. The required notions are introduced formally in \cite[Chapter~2]{GiWe1}. {}From Section~\ref{S:liftPID} on, the reader can safely forget most of the notations and definitions introduced in Section~\ref{S:DefCond}, but should keep in mind the crucial Lemma~\ref{L:CC}, which requires the notion of a \emph{norm-covering}~$U$ of a poset $P$. Given a $P$-indexed diagram $\vec S$ in a variety~$\cV$ of algebras, we shall recall the construction of the algebra $\xF(U)\otimes\vec S$ in~$\cV$ and the morphism $\pi_u^U\otimes\vec S\colon \xF(U)\otimes\vec S\to S_{\partial u}$, for each $u\in U$.

The following notation is introduced in \cite[Section~2-1]{GiWe1}.

\begin{notation}
Let $X$ be a subset of a poset~$P$, we denote by $\Sor X$ the set of all minimal elements of $P\Upw X$.
\end{notation}

The following definition is given in \cite[Section~2-1]{GiWe1}.

\begin{definition}
A subset $X$ of a poset~$P$ is \emph{$\Sor$-closed} if $\Sor Y\subseteq X$ for every finite subset $Y$ of $X$. The \emph{$\Sor$-closure} of a subset $X$ of~$P$ is the least $\Sor$-closed subset of~$P$ containing $X$.

We say that~$P$ is \emph{supported} if $P\Upw X$ is a finitely generated upper subset of~$P$ and the $\Sor$-closure of $X$ is finite, for every finite (possibly empty) subset $X$ of~$P$.
\end{definition}

Notice that the definition of a supported poset is equivalent to the one given in \cite[Definition 4.1]{G1}. The \emph{kernels} of a supported poset~$P$ are the finite $\Sor$-closed subsets of~$P$.

The following definition is given in \cite[Definition 4.3]{G1}.

\begin{definition}\label{D:normcov}
A \emph{norm-covering} of a poset~$P$ is a pair $(U,\partial)$ where $U$ is a supported poset and $\partial\colon U\to P$ is an isotone map.

A \emph{sharp ideal} of $(U, \partial)$ is an ideal $\bu$ of $U$ such that $\setm{\partial x}{x\in\bu}$ has a largest element; we denote this element by~$\partial\bu$. We denote by $\Ids(U, \partial)$ the set of all sharp ideals of $(U, \partial)$.
\end{definition}

Notice that this definition of a norm-covering is slightly stronger than \cite[Section~2-6]{GiWe1}. However, this does not affect the definition of an \emph{$\aleph_0$-lifter} (cf. \cite[Section~3-2]{GiWe1}), as in that case we require the norm-covering to be supported.

\begin{remark}
In the context of Definition~\ref{D:normcov}, every principal ideal is sharp. The converse does not hold as a rule. However, in the present paper, we shall only consider norm-coverings for which every sharp ideal is principal, in which case we can identify sharp ideals of $(U, \partial)$ with elements of $U$.
\end{remark}

The following definition comes from \cite[Section~2-2]{GiWe1}.

\begin{definition}
Let~$P$ be a poset. A \emph{$P$-scaled Boolean algebra $\bA$} is a Boolean algebra~$A$, together with a family $\famm{A^{(p)}}{p\in P}$ of ideals of~$A$, such that:
\begin{enumerate}
\item $A=\bigvee \famm{A^{(p)}}{p\in P}$;
\item $A^{(p)}\cap A^{(q)}=\bigvee \famm{A^{(r)}}{\text{$r\ge p,q$ in~$P$}}$, for all $p,q\in P$;
\end{enumerate}
where joins are taken in the lattice of ideals of $A$. As an immediate consequence the assignment $p\mapsto A^{(p)}$ is order-reversing.

A \emph{morphism} $f\colon \bA\to\bB$ of~$P$-scaled Boolean algebras is a morphism $f\colon A\to B$ of Boolean algebras such that $f(A^{(p)})\subseteq B^{(p)}$, for all $p\in P$. We denote by $\Bool_P$ the category of~$P$-scaled Boolean algebras with morphisms of~$P$-scaled Boolean algebras.
\end{definition}

The following definition comes from \cite[Section~2-4]{GiWe1}.

\begin{definition}
A~$P$-scaled Boolean algebra $\bA$ is \emph{compact} if~$A$ is finite and, for each atom~$a$ of~$A$, there is a largest $p\in P$ such that $a\in A^{(p)}$. We set $\module{a}=p$, for this~$p$.
\end{definition}

The finitely presented (in the categorical sense) objects in the category $\Bool_P$ are exactly the compact~$P$-scaled Boolean algebras (cf. \cite[Section~2-4]{GiWe1}). Every~$P$-scaled Boolean algebra is a small directed colimit of compact~$P$-scaled Boolean algebras (cf. \cite[Section~2-4]{GiWe1}).

The following examples of compact $P$-scaled Boolean algebras appear in \cite[Section~2-6]{GiWe1}. They will be used in the proof of Lemma~\ref{L:CC}.

\begin{notation}
Given $p,q$ in a poset $P$ we put
\begin{align*}
\two[p]^{(q)}=
\begin{cases}
\set{0,1},&\text{if $q\le p$}\\
\set{0},&\text{otherwise.}
\end{cases}
\end{align*}
The structure $\two[p]=(\two,\famm{\two[p]^{(q)}}{q\in P})$ is a $P$-scaled Boolean algebra, for each $p\in P$. Moreover, given $p\le q$, the identity map on~$\set{0,1}$ defines a morphism of $P$-scaled Boolean algebras from $\two[p]$ to $\two[q]$; we denote this morphism by $\varepsilon_p^q$.
\end{notation}

We summarize here another family of $P$-scaled Boolean algebras, constructed in \cite[Section 2-6]{GiWe1}. For the sake of simplicity we give the notations only in the cases that we need.

\begin{notations}\label{N:free}
Let $(U,\partial)$ be a norm-covering of a poset $P$. The Boolean algebra $\rF(U)$, defined in \cite[Section~2-6]{GiWe1}, is the Boolean algebra defined by generators $\widetilde u$ (or $\widetilde u^U$ in case $U$ needs to be specified), for $u\in U$, and the relations:
 \begin{align*}
 \widetilde v&\le\widetilde u, && \text{for all $u\le v$ in $U$.}\\
 \widetilde u\wedge \widetilde v &= \bigvee \famm{\widetilde w}
 {w\in\Sor\set{u,v}}, && \text{for all $u,v$ in $U$.}\\
 1 &= \bigvee \famm{\widetilde u}{u\text{ minimal element of }U}.
 \end{align*}
We denote by $\rF(U)^{(p)}$ the ideal of $\rF(U)$ generated by $\setm{\widetilde u}{u\in U\text{ and }p\le \partial u}$. Then $\xF(U)=(\rF(U),\famm{\rF(U)^{(p)}}{p\in P})$ is a $P$-scaled Boolean algebra (cf. \cite[Section~2-6]{GiWe1}).

Given a $\Sor$-closed subset $V$ of $U$, we denote by $f_V^U\colon \rF(V)\to \rF(U)$ the unique morphism of Boolean algebras such that $f_V^U(\widetilde u^V)=\widetilde u^U$ for all $u\in V$. Moreover, $f_V^U$ is a morphism of $P$-scaled Boolean algebras from $\xF(V)$ to $\xF(U)$ (cf. \cite[Section~2-6]{GiWe1}).

Given $u\in U$, we denote by $\pi_u^U\colon\rF(U)\to\two$ the unique morphism of Boolean algebras such that
\begin{align*}
\pi_u^U(\widetilde v)=
\begin{cases}
1, &\text{if $v\le u$}\\
0, &\text{otherwise}
\end{cases},\quad\text{for all $v\in U$}.
\end{align*}
Then $\pi_u^U$ defines a morphism of $P$-scaled Boolean algebras from $\xF(U)$ to $\two[\partial u]$ (cf. \cite[Section~2-6]{GiWe1}).
\end{notations}

The following construction of \emph{condensates} appears in \cite[Section~3-1]{GiWe1}.

\begin{notations}
Let~$\cV$ be a variety of algebras, let $P$ be a poset, and let $\vec S=\famm{S_p,\sigma_p^q}{p\le q\text{ in }P}$ be a $P$-indexed diagram in~$\cV$. Given a compact $P$-scaled Boolean algebra $\bA$, we put
\[
\bA\otimes \vec S=\prod\famm{S_{\module u}}{u\in\At A},
\]
with canonical projections $\delta_{\bA}^u\colon \bA\otimes \vec S\to S_{\module u}$, for all $u\in\At A$.

Let $\varphi\colon\bA\to\bB$ be a morphism of compact $P$-scaled Boolean algebras. Given an atom $v\in B$, we denote by $v^\varphi$ the unique atom $u\in A$ such that $v\le\varphi(u)$. We define $\varphi\otimes \vec S$ as the unique morphism from $\bA\otimes \vec S$ to $\bB\otimes \vec S$ such that $\delta_{\bB}^v\circ(\varphi\otimes \vec S)=\sigma_{\module{v^\varphi}}^{\module{v}}\circ\delta_{\bA}^{v^\varphi}$ for each atom~$v$ of~$B$.

This construction defines a functor $-\otimes\vec S$ from the full subcategory of compact $P$-scaled Boolean algebras of $\Bool_P$ to~$\cV$. It it proved in \cite[Section~1-4]{GiWe1} that this functor can be extended (uniquely up to isomorphism) to the whole category $\Bool_P$ (cf. \cite[Section~3-1]{GiWe1}). We denote by $-\otimes\vec S$ this extension.

An object of the form $\bA\otimes\vec S$, for a $P$-scaled Boolean algebra $\bA$, is called a \emph{condensate} of $\vec S$.
\end{notations}

The construction of $\otimes$ implies the following lemma (cf. \cite[Section~3-1]{GiWe1}).

\begin{lemma}\label{L:epsilonmaps}
Let $\vec S=\famm{S_p,\sigma_p^q}{p\le q\text{ in }P}$ be a $P$-indexed diagram in a variety of algebras~$\cV$. The following equalities hold.
\begin{enumerate}
\item $\two[p]\otimes\vec S=S_p$, for all $p\in P$.
\item $\varepsilon_p^q\otimes\vec S=\sigma_p^q$, for all $p\le q$ in $P$.
\end{enumerate}
\end{lemma}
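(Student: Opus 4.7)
The plan is to unwind the definitions of $\bA\otimes\vec S$ and $\varphi\otimes\vec S$ in the specific case where $\bA=\two[p]$, using the observation that the Boolean algebra $\two$ has exactly one atom, namely~$1$. Because of this, every product and every atom-indexed piece appearing in the definition of the tensor construction degenerates into a single factor, so both parts of the lemma reduce to a short computation of the ``norm'' $\module{\cdot}$ and, for part~(2), of the atom-assignment $v\mapsto v^\varphi$ in this trivial case.

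For~(1), I first record that $\At(\two)=\set{1}$ and then compute $\module{1}$ inside the $P$-scaled Boolean algebra $\two[p]$. By the very definition of the scaling family $\famm{\two[p]^{(q)}}{q\in P}$, the element $1$ lies in $\two[p]^{(q)}$ if and only if $q\le p$, so the largest such~$q$ is~$p$ itself, whence $\module{1}=p$. The definition
\[
\bA\otimes \vec S=\prod\famm{S_{\module u}}{u\in\At A}
\]
then gives $\two[p]\otimes\vec S=S_p$, after the standard identification of a one-factor product with its unique factor (which is used implicitly throughout the condensate construction).

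For~(2), fix $p\le q$ in~$P$. The morphism $\varepsilon_p^q\colon\two[p]\to\two[q]$ acts as the identity on the underlying Boolean algebra~$\two$, so the unique atom $u$ of~$\two[p]$ satisfying $1\le\varepsilon_p^q(u)$ is $u=1$; that is, $1^{\varepsilon_p^q}=1$. Substituting into the defining relation
\[
\delta_{\two[q]}^{1}\circ(\varepsilon_p^q\otimes\vec S)=\sigma_{\module{1^{\varepsilon_p^q}}}^{\module{1}}\circ\delta_{\two[p]}^{1},
\]
and observing that, under the identifications $\two[p]\otimes\vec S=S_p$ and $\two[q]\otimes\vec S=S_q$ coming from~(1), both canonical projections $\delta_{\two[p]}^{1}$ and $\delta_{\two[q]}^{1}$ become identity morphisms, the displayed equation collapses to $\varepsilon_p^q\otimes\vec S=\sigma_p^q$, as desired.

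There is no real obstacle: the entire statement is a degeneration of the general definition to the one-atom case, and the only bookkeeping is the canonical identification $S_p=\prod\famm{S_r}{r\in\set{p}}$ that makes ``$\two[p]\otimes\vec S=S_p$'' readable as an equality rather than as a canonical isomorphism.
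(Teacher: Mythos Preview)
Your proof is correct and follows the same route the paper indicates: the paper does not give a detailed proof but merely states that the lemma follows directly from the construction of~$\otimes$ (referring to \cite[Section~3-1]{GiWe1}), and your argument is precisely that unwinding of the definitions in the one-atom case. The only minor remark is that you are tacitly using that $\two[p]$ is a \emph{compact} $P$-scaled Boolean algebra (so the explicit product formula for~$\otimes$ applies), which is exactly what your computation of $\module{1}=p$ confirms.
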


The following lemma expresses that a condensate of a diagram contains copies of the algebras in the diagram.

\begin{lemma}\label{L:CC}
Let $U$ be a norm-covering of a poset~$P$. Assume that both $U$ and~$P$ have a least element and that $\partial 0=0$. Let $\vec S=\famm{S_p,\sigma_p^q}{p\le q\text{ in }P}$ be a diagram in~$\cV$. There is a morphism $d_0\colon S_0\to \xF(U)\otimes \vec S$ such that $(\pi_u^U\otimes\vec S)\circ d_0 =\sigma_0^{\partial u}$ for each $u\in U$.

Let $u\in U-\set{0}$. There is a morphism $d_u\colon S_0\times S_{\partial u}\to \xF(U)\otimes \vec S$ such that, denoting $t_0\colon S_0\times S_{\partial u}\to S_0$ and $t_1\colon S_0\times S_{\partial u}\to S_{\partial u}$ the canonical projections, the following equality holds
\[
(\pi_v^U\otimes\vec S)\circ d_u=
\begin{cases}
\sigma_0^{\partial v}\circ t_0\,,& \text{if $u\not\le v$}\\
\sigma_{\partial u}^{\partial v}\circ t_1\,,& \text{if $u\le v$}
\end{cases},\quad\text{for each $v\in U$}.
\]
\end{lemma}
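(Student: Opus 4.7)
The plan is to construct $d_0$ and $d_u$ by applying the functor $-\otimes \vec{S}$ to morphisms in $\Bool_P$ from suitable \emph{compact} $P$-scaled Boolean algebras into $\xF(U)$, so that both the source identifications and the claimed formulae fall out of functoriality combined with Lemma~\ref{L:epsilonmaps}.

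For $d_0$, I would exploit that $\widetilde{0}^U = 1$ in $\rF(U)$ because $\{0\}$ is the set of minimal elements of $U$. Thus the unique Boolean-algebra morphism $f_0\colon \two \to \rF(U)$ sending $1$ to $1$ can be viewed as a morphism $f_0\colon \two[0]\to\xF(U)$ of $P$-scaled Boolean algebras: the only nontrivial verification is that $f_0(\two[0]^{(0)})\subseteq\xF(U)^{(0)}$, which holds because $\partial 0 = 0$ makes $\widetilde 0^U$ a generator of $\xF(U)^{(0)}$. Setting $d_0 = f_0\otimes\vec{S}$, Lemma~\ref{L:epsilonmaps}(1) identifies its source with $S_0$, and the relation $(\pi_u^U\otimes\vec{S})\circ d_0 = \sigma_0^{\partial u}$ follows by noting that $\pi_u^U\circ f_0 = \varepsilon_0^{\partial u}$ (since $0\le u$ gives $\pi_u^U(\widetilde{0}^U)=1$) and applying Lemma~\ref{L:epsilonmaps}(2).

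For $d_u$ with $u\ne 0$, I would take the product $\bB = \two[0]\times\two[\partial u]$ in $\Bool_P$, a compact $P$-scaled Boolean algebra with atoms $e_0 = (1,0)$ of module~$0$ and $e_1=(0,1)$ of module~$\partial u$; consequently $\bB\otimes\vec{S} = S_0\times S_{\partial u}$, with projections which I may identify with $t_0$ and $t_1$. Define $f_u\colon \bB \to \xF(U)$ as the unique Boolean-algebra morphism sending $e_0\mapsto 1-\widetilde{u}^U$ and $e_1\mapsto \widetilde{u}^U$. The $P$-scaled check reduces to describing $\bB^{(p)}$: it equals $\bB$ for $p=0$, equals $\{0,e_1\}$ for $0\neq p\le\partial u$, and is trivial otherwise; since $\widetilde{u}^U\in\xF(U)^{(p)}$ whenever $p\le\partial u$, the inclusion $f_u(\bB^{(p)})\subseteq\xF(U)^{(p)}$ holds in each case. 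Setting $d_u = f_u\otimes\vec{S}$, the desired formula is obtained by functoriality: one computes $\pi_v^U\circ f_u$, which sends $e_0\mapsto 1-\pi_v^U(\widetilde{u}^U)$ and $e_1\mapsto \pi_v^U(\widetilde{u}^U)$. Since $\pi_v^U(\widetilde{u}^U)$ is $1$ when $u\le v$ and $0$ otherwise, $\pi_v^U\circ f_u$ factors as $\varepsilon_0^{\partial v}\circ\mathrm{pr}_0$ in the first case and as $\varepsilon_{\partial u}^{\partial v}\circ\mathrm{pr}_1$ in the second, where $\mathrm{pr}_0,\mathrm{pr}_1$ are the $\Bool_P$-projections of $\bB$; applying $-\otimes\vec{S}$ and invoking Lemma~\ref{L:epsilonmaps} produces exactly $\sigma_0^{\partial v}\circ t_0$ and $\sigma_{\partial u}^{\partial v}\circ t_1$. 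The only mild obstacle is the verification that $f_0$ and $f_u$ respect the scaled structure, which hinges on the explicit description of $\xF(U)^{(p)}$ as the ideal generated by $\{\widetilde{w}^U \mid p\le\partial w\}$ from Notations~\ref{N:free}.
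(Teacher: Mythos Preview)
Your approach is essentially identical to the paper's: the paper sets $d_0=f_{\{0\}}^U\otimes\vec S$ and $d_u=f_{\{0,u\}}^U\otimes\vec S$, and your compact $P$-scaled Boolean algebras $\two[0]$ and $\two[0]\times\two[\partial u]$ are precisely $\xF(\{0\})$ and $\xF(\{0,u\})$ (with $e_0\leftrightarrow\neg\widetilde u$, $e_1\leftrightarrow\widetilde u$), so your $f_0,f_u$ coincide with the paper's $f_{\{0\}}^U,f_{\{0,u\}}^U$. One small slip: in your final sentence the two cases are interchanged---when $u\le v$ one has $\pi_v^U(\widetilde u^U)=1$, hence $\pi_v^U\circ f_u=\varepsilon_{\partial u}^{\partial v}\circ\mathrm{pr}_1$ (giving $\sigma_{\partial u}^{\partial v}\circ t_1$), and when $u\not\le v$ one gets $\varepsilon_0^{\partial v}\circ\mathrm{pr}_0$ (giving $\sigma_0^{\partial v}\circ t_0$).
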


\begin{proof}
Notice that $\rF(\set{0})=\set{0,1}$ is the two-element Boolean algebra. Moreover, given $p>0$ in $P$, $\rF(\set{0})^{(p)}=\set{0}$ and $\rF(\set{0})^{(0)}=\set{0,1}$. Hence $\xF(\set{0})=\two[0]$, thus it follows from Lemma~\ref{L:epsilonmaps}(1) that $\xF(\set{0})\otimes \vec S=S_0$. Notice that $\set{0}$ is a $\Sor$-closed subset of $U$. Put $d_0=f_{\set{0}}^U\otimes \vec S$ (cf. Notations~\ref{N:free}). Given $u\in U$ the following equalities hold
\begin{align*}
(\pi_u^U\otimes \vec S)\circ d_0 
& = (\pi_u^U\otimes \vec S)\circ (f_{\set{0}}^U\otimes \vec S)\\
& = (\pi_u^U\circ f_{\set{0}}^U)\otimes \vec S, & & \text{as $-\otimes \vec S$ is a functor}\\
& = \varepsilon_0^{\partial u} \otimes \vec S, &&\text{as $\pi_u^U\circ f_{\set{0}}^U=\varepsilon_0^{\partial u}$}\\
& = \sigma_0^{\partial u}, &&\text{by Lemma~\ref{L:epsilonmaps}(2)}.
\end{align*}
 
Let $u\in U-\set{0}$. Notice that $\rF(\set{0,u})=\set{0,\neg\widetilde u,\widetilde u,1}$, with $\widetilde{0}=1$, is the four-element Boolean algebra. Moreover
\[
\rF(\set{0,u})^{(p)}=
\begin{cases}
\set{0}, & \text{if $p\not\le \partial u$}\\
\set{0,\widetilde u}, & \text{if $0<p\le \partial u$}\\
\set{0,\neg\widetilde u,\widetilde u,1}, &\text{if $p=0$}
\end{cases},\quad\text{for all $p\in P$.}
\]
It follows that $\module{\widetilde u}=\partial u$ and $\module{\neg\widetilde u}=0$, hence $\xF(\set{0,u})\otimes\vec S=S_0\times S_{\partial u}$. Moreover, $\pi_0^{\set{0,u}}$ is the unique morphism from $\xF(\set{0,u})$ to $\two[\partial 0]=\two[0]$ that sends~$\widetilde{u}$ to~$0$, while $\pi_u^{\set{0,u}}$ is the unique morphism from $\xF(\set{0,u})$ to $\two[\partial u]$ that sends~$\widetilde{u}$ to~$1$. It follows that $\pi_0^{\set{0,u}}\otimes\vec S=t_0$ and $\pi_u^{\set{0,u}}\otimes\vec S=t_1$.

Notice that $\set{0,u}$ is a $\Sor$-closed subset of $U$. The following equality holds
 \[
 \pi_v^U \circ f_{\set{0,u}}^U=
 \begin{cases}
 \varepsilon_0^{\partial v}\circ \pi_0^{\set{0,u}}, & \text{if $u\not\le v$}\\
 \varepsilon_{\partial u}^{\partial v}\circ \pi_u^{\set{0,u}}, &
 \text{if $u\le v$}
 \end{cases},\quad\text{for each $v\in U$}.
 \]
Therefore, the map $d_u=f_{\set{0,u}}^U\otimes\vec S$ satisfies the required conditions.
\end{proof}

\section{Lifting poset-indexed diagrams}\label{S:liftPID}

In this section we fix varieties~$\cV$ and~$\cW$ of algebras, an infinite cardinal~$\kappa$, an algebra~$G\in\cW$, and an isomorphism $\xi\colon\Conc \Fr_\cV(\kappa)\to \Conc G$.

Given a finite algebra~$A$, the following lemma expresses that there is a large family of quotients $\Fr_\cV(\kappa)\tosurj A$ such that the corresponding quotients of~$G$ have all the same cardinality.

\begin{lemma}\label{L:TTL}
Let~$A$ be a finite algebra of~$\cV$, fix~$c\in A$. Assume that there is an integer $m$ such that~$\card B\le m$ for every algebra $B\in\cW$ with $\Conc B\cong\Conc A$. For a function~$t$ from a subset of~$\kappa$ to~$A$, we denote by~$\overline t\colon\Fr_{\cV}(\kappa)\to A$ the unique homomorphism extending~$t$ and sending every element of $\kappa-\dom(t)$ to~$c$. For each $X_0\in[\kappa]^{\kappa}$, there are $X\in[X_0]^{\kappa}$ and an integer $n$ such that the following statements hold:
\begin{enumerate}
\item Let $t\colon X\tosurj A$. Then~$\card\bigl(G/\xi(\ker \overline t)\bigr)\le n$.
\item For each~$Y\in[X]^{\kappa}$ there exists $t\colon Y\tosurj A$ such that~$\card\bigl(G/\xi(\ker \overline t)\bigr)= n$.
\end{enumerate}
\end{lemma}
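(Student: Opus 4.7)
The plan is to first establish a uniform bound on $\card(G/\xi(\ker\overline t))$ for surjections $t$, then apply a monotone-stabilization argument on $[X_0]^\kappa$.

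\textbf{Step 1 (uniform bound).} Although $\ker\overline t$ need not be a compact congruence of $\Fr_\cV(\kappa)$, the isomorphism $\xi$ between semilattices of compact congruences extends uniquely to a lattice isomorphism $\bar\xi\colon\Con\Fr_\cV(\kappa)\to\Con G$ between the full algebraic congruence lattices (each algebraic lattice being determined by its semilattice of compact elements). Restricting $\bar\xi$ to the principal filter above $\ker\overline t$ and using that every lattice isomorphism of algebraic lattices preserves compactness, one obtains $\Conc(\Fr_\cV(\kappa)/\ker\overline t)\cong\Conc(G/\xi(\ker\overline t))$. When $t$ is surjective onto $A$, the left-hand side is $\Conc A$, so the hypothesis on $m$ gives $\card(G/\xi(\ker\overline t))\le m$.

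\textbf{Step 2 (stabilization).} For each $X\in[X_0]^\kappa$, set
\[
M(X)=\max\setm{\card(G/\xi(\ker\overline t))}{Y\in[X]^\kappa\text{ and }t\colon Y\tosurj A}\,,
\]
which is well-defined and lies in $\set{1,\dots,m}$ by Step~1. The map $M$ is monotone: if $X_1\subseteq X_2$ then $[X_1]^\kappa\subseteq[X_2]^\kappa$, so $M(X_1)\le M(X_2)$. Put $n=\min\setm{M(X)}{X\in[X_0]^\kappa}$, which is attained since $M$ takes values in a finite set; fix such an $X$ with $M(X)=n$.

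\textbf{Step 3 (verification).} For (1), any surjection $t\colon X\tosurj A$ contributes to the maximum defining $M(X)=n$, so $\card(G/\xi(\ker\overline t))\le n$. For (2), given $Y\in[X]^\kappa$, we have $Y\in[X_0]^\kappa$, whence $M(Y)\ge n$ by minimality of $n$, while $M(Y)\le M(X)=n$ by monotonicity; hence $M(Y)=n$. This value is attained by some $Y'\in[Y]^\kappa$ and surjection $t\colon Y'\tosurj A$. Extend $t$ to $\hat t\colon Y\to A$ by $\hat t(y)=c$ for $y\in Y-Y'$; then $\hat t$ is still surjective onto $A$, and $\overline{\hat t}$ and $\overline t$ agree on every free generator of $\Fr_\cV(\kappa)$ (they coincide with $t$ on $Y'$ and equal $c$ elsewhere), so $\overline{\hat t}=\overline t$ and $\card(G/\xi(\ker\overline{\hat t}))=n$.

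\textbf{Main obstacle.} The delicate point is Step~1: the congruence $\ker\overline t$ typically has infinite equivalence classes and is not a compact congruence, so $\xi(\ker\overline t)$ is not defined directly by $\xi$; one must pass through the canonical extension to the full algebraic congruence lattices and verify that the corresponding interval isomorphism still preserves compact elements in order to apply the hypothesis on $m$. Once this bound is in place, the rest is a clean monotone-stabilization on $[X_0]^\kappa$ together with the routine extension of a partial surjection to all of $Y$ by the constant $c$.
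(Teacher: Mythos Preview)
Your proof is correct and follows essentially the same strategy as the paper: first bound $\card\bigl(G/\xi(\ker\overline t)\bigr)$ by $m$ via the isomorphism $\Conc(G/\xi(\ker\overline t))\cong\Conc A$, then take $n$ minimal for which condition~(1) is realized by some $X\in[X_0]^\kappa$, and deduce~(2) from minimality. Your treatment is in fact more explicit than the paper's on two points it leaves tacit: the passage from $\xi$ to the full congruence lattice needed to make sense of $\xi(\ker\overline t)$, and the extension-by-$c$ trick showing that a surjection $t\colon Y'\tosurj A$ with $Y'\subseteq Y$ yields one with domain~$Y$ and the same $\overline t$.
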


\begin{proof}
Let $t\colon \kappa\tosurj A$. Notice that $\Conc\bigl( G/\xi(\ker \overline t)\bigr) \cong \Conc\bigl(\Fr_\cV(\kappa)/\ker\overline t\bigr)\cong \Conc A$, so
\begin{equation}\label{E:L:PTTL}
\card\bigl(G/\xi(\ker \overline t)\bigr)\le m\,,\quad\text{for each $t\colon \kappa\tosurj A$.}
\end{equation}

Given $X\in[X_0]^{\kappa}$ we set $n_X= \max\setm{\card\bigl(G/\xi(\ker \overline t)\bigr)}{t\colon X\tosurj A}$, note that the maximum exists and is $\le m$ by \eqref{E:L:PTTL}. Moreover 
\begin{equation}\label{E:L:PTTL2}
n_Y\le n_X\,,\quad\text{for all $Y\subseteq X$ in $[X_0]^{\kappa}$.}
\end{equation}

Fix $X\in[X_0]^{\kappa}$ such that $n_X$ is smallest possible. It follows from the definition of $n_X$ that $(1)$ holds for $n=n_X$.

Let~$Y\in[X]^{\kappa}$. From the minimality of $n_X$ we have $n_Y\ge n_X$, the equality $n_Y=n_X$ follows from \eqref{E:L:PTTL2}. Therefore there is $t\colon Y\tosurj A$ such that $\card\bigl(G/\xi(\ker \overline t)\bigr)=n_Y=n_X$. That is $(2)$ holds for $n=n_X$.
\end{proof}

Notice that given $X\in[\kappa]^{\kappa}$ that satisfies the conditions (1) and (2) of Lemma~\ref{L:TTL}, then every $X'\in[X]^{\kappa}$ also satisfies those conditions. Hence an easy induction argument yields the following lemma.

\begin{lemma}\label{L:RTTL}
Let $(A_i)_{i\in I}$ be a finite family of finite algebras of~$\cV$, fix~$c_i\in A_i$ for each $i\in I$. Assume that there is an integer $m$ such that~$\card B\le m$ for each algebra $B\in\cW$ with $\Conc B\cong\Conc A_i$ for some $i\in I$. For a function~$t$ from a subset of~$\kappa$ to~$A_i$, we denote by~$t^{(i)}\colon\Fr_{\cV}(\kappa)\to A_i$ the unique homomorphism extending~$t$ and sending each element of $\kappa-\dom(t)$ to~$c_i$.

Then for each~$X_0 \in[\kappa]^{\kappa}$, there are $X\in[X_0]^{\kappa}$ and a family $(n_i)_{i\in I}$ of integers such that the following statements hold:
\begin{enumerate}
\item Let $t\colon X\tosurj A_i$. Then~$\card\bigl(G/\xi(\ker t^{(i)})\bigr)\le n_i$;
\item For each~$Y\in[X]^{\kappa}$ there exists $t\colon Y\tosurj A_i$ such that~$\card\bigl(G/\xi(\ker t^{(i)})\bigr)= n_i$;
\end{enumerate}
for each $i\in I$.
\end{lemma}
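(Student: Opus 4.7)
My plan is induction on $\card I$, with the crux being a preservation (monotonicity) property that lets conclusions established for one algebra survive when we shrink the witnessing set to handle the next algebra.

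First, I would verify the preservation claim suggested by the paragraph preceding the lemma: if $X\in[\kappa]^{\kappa}$ satisfies conditions (1) and (2) of Lemma~\ref{L:TTL} with respect to a given $(A,c,n)$, then every $X'\in[X]^{\kappa}$ does as well. For (1), given a surjection $t'\colon X'\tosurj A$, extend it to $t\colon X\to A$ by sending every element of $X-X'$ to $c$; then $t$ is still surjective onto $A$, and the homomorphisms $\overline{t'},\overline t\colon\Fr_\cV(\kappa)\to A$ coincide (they agree with $t'$ on $X'$ and send $\kappa-X'$ to $c$), so $\card\bigl(G/\xi(\ker\overline{t'})\bigr)=\card\bigl(G/\xi(\ker\overline t)\bigr)\le n$. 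For (2), any $Y\in[X']^{\kappa}$ lies in $[X]^{\kappa}$, hence a witness provided by condition~(2) for $X$ is also a witness for $X'$.

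Next I would run the induction on $\card I$. The base case $I=\emptyset$ is immediate ($X=X_0$, empty family of integers). For the inductive step, fix $i_0\in I$ and set $J=I-\set{i_0}$. By the induction hypothesis applied to the subfamily $(A_i)_{i\in J}$ with their constants $c_i$ and to $X_0$, there exist $X_1\in[X_0]^{\kappa}$ and integers $(n_i)_{i\in J}$ for which (1) and~(2) hold with respect to $X_1$ for every $i\in J$. Now apply Lemma~\ref{L:TTL} to $A_{i_0}$, with $c_{i_0}$, starting from $X_1$ in place of $X_0$; this produces $X\in[X_1]^{\kappa}$ and an integer $n_{i_0}$ verifying (1) and~(2) for $i_0$ with respect to $X$. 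Since $X\subseteq X_1$, the preservation claim ensures that $X$ still satisfies (1) and~(2) for every $i\in J$ with the integers $n_i$. Thus $X$ and $(n_i)_{i\in I}$ are as required.

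The only substantive point is the preservation claim; once the identity $\overline{t'}=\overline t$ is observed (which hinges on the convention that the extensions $\overline t$ send all indices outside $\dom t$ to the fixed constant~$c$), the induction is mechanical, and the uniform bound $m$ in the hypothesis guarantees Lemma~\ref{L:TTL} is applicable at each stage to whichever $A_i$ we choose.
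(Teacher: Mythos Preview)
Your proposal is correct and follows exactly the approach the paper indicates: the paper simply observes that any $X'\in[X]^{\kappa}$ inherits conditions (1) and (2) of Lemma~\ref{L:TTL} from~$X$, and then remarks that an easy induction argument yields the lemma. You have faithfully spelled out both the preservation claim (including the key identity $\overline{t'}=\overline t$) and the induction, so nothing differs from the paper's intent.
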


\begin{lemma}\label{R:L:RTTL}
In the context of Lemma~\textup{\ref{L:RTTL}}, if there is an isomorphism $\alpha\colon A_i\to A_j$ such that $\alpha(c_i)=c_j$, then $n_i=n_j$.
\end{lemma}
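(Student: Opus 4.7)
The plan is to exploit the isomorphism $\alpha$ in order to set up a kernel-preserving bijection between the surjections $t\colon X\tosurj A_i$ and the surjections $s\colon X\tosurj A_j$, from which the equality $n_i=n_j$ will follow immediately from the characterization of these integers given in Lemma~\ref{L:RTTL}.

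The key observation is that the uniqueness built into the definition of the extension $t\mapsto t^{(i)}$ is compatible with post-composition by $\alpha$. Precisely, for any surjection $t\colon X\tosurj A_i$, the map $\alpha\circ t^{(i)}\colon\Fr_\cV(\kappa)\to A_j$ is a homomorphism extending $\alpha\circ t$ on $X$ and sending every element of $\kappa-X$ to $\alpha(c_i)=c_j$. By the uniqueness clause of Lemma~\ref{L:RTTL}, this forces the identity
\[
(\alpha\circ t)^{(j)}=\alpha\circ t^{(i)}.
\]
Since $\alpha$ is an isomorphism, it is injective, so $\ker(\alpha\circ t^{(i)})=\ker t^{(i)}$, and therefore $\ker((\alpha\circ t)^{(j)})=\ker t^{(i)}$. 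Applying the isomorphism~$\xi$ and passing to the quotient of~$G$ gives
\[
\card\bigl(G/\xi(\ker (\alpha\circ t)^{(j)})\bigr)=\card\bigl(G/\xi(\ker t^{(i)})\bigr).
\]

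To conclude, I would note that $t\mapsto\alpha\circ t$ is a bijection from the set of surjections $X\tosurj A_i$ onto the set of surjections $X\tosurj A_j$, with inverse $s\mapsto\alpha^{-1}\circ s$ (the symmetric hypothesis $\alpha^{-1}(c_j)=c_i$ being automatic). Specializing clause~(2) of Lemma~\ref{L:RTTL} to $Y=X$, together with clause~(1), identifies $n_i$ as the maximum value of $\card\bigl(G/\xi(\ker t^{(i)})\bigr)$ as $t$ ranges over the surjections $X\tosurj A_i$; the analogous statement characterizes $n_j$. The cardinality-preserving bijection above forces these two maxima to coincide, giving $n_i=n_j$. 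I do not anticipate any real obstacle here: the hypothesis $\alpha(c_i)=c_j$ is exactly what is needed to make the extension operation equivariant under $\alpha$, and everything else is a straightforward unwinding of the characterization of the integers $n_i$.
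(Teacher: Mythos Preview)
Your proposal is correct and takes essentially the same approach as the paper: both establish the identity $(\alpha\circ t)^{(j)}=\alpha\circ t^{(i)}$, deduce equality of kernels, and then combine clauses~(1) and~(2) of Lemma~\ref{L:RTTL} (with $Y=X$) to conclude. The paper phrases the last step as the pair of inequalities $n_i\le n_j$ and $n_j\le n_i$, whereas you frame it as equality of two maxima under a value-preserving bijection, but the content is identical.
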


\begin{proof}
It follows from Lemma~\ref{L:RTTL}(2), applied to $Y=X$, that there exists $t\colon X\tosurj A_i$ such that~$\card\bigl(G/\xi(\ker t^{(i)})\bigr)=n_i$. Put $s=\alpha\circ t$. Notice that $s^{(j)}=\alpha\circ (t^{(i)})$, so $\ker s^{(j)} = \ker t^{(i)}$, thus 
$n_i=\card\bigl(G/\xi(\ker t^{(i)})\bigr) = \card\bigl(G/\xi(\ker s^{(j)})\bigr)\le n_j$. With a similar argument we obtain $n_j\le n_i$.
\end{proof}

The following lemma illustrates that a natural transformation can be factored through a natural equivalence.

\begin{lemma}\label{L:natisodiag}
Let $I$ be a poset, let $\vec G=\famm{G_i,g_i^j}{i\le j\text{ in }I}$ be a diagram of algebras, let $\vec D=\famm{D_i,d_i^j}{i\le j\text{ in }I}$ be a diagram of \jzs s, and let $\vec\chi=\famm{\chi_i}{i\in I}\colon\Conc\circ\vec G\to\vec D$ be a natural transformation. Let $\theta_i$ be a congruence of $G_i$, denote by $p_i\colon G_i\tosurj G_i/\theta_i$ the canonical projection, for each $i\in I$. We assume that each~$\chi_i$ factors through $\Conc p_i$ to an isomorphism. Then $g_i^j$ induces a morphism $\beta_i^j\colon G_i/\theta_i \to G_j/\theta_j$ for all $i\le j$ in $I$. Moreover, $\vec B=\famm{G_i/\theta_i,\beta_i^j}{i\le j\text{ in }I}$ is a diagram of algebras and $\vec\chi$ induces a natural isomorphism from $\Conc\circ\vec G$ to $\vec D$.
\end{lemma}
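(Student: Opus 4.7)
The plan is to use the hypothesis that each $\chi_i$ factors as $\chi_i=\chi_i'\circ\Conc p_i$ with $\chi_i'\colon\Conc(G_i/\theta_i)\to D_i$ an isomorphism, and to exploit the fact that $\Conc p_i$ is a surjective \jzh\ whose kernel (viewed as a congruence of $\Conc G_i$) consists exactly of the compact congruences of $G_i$ contained in $\theta_i$.

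First I would establish that $g_i^j$ drops to a morphism $\beta_i^j\colon G_i/\theta_i\to G_j/\theta_j$ making the square $\beta_i^j\circ p_i=p_j\circ g_i^j$ commute. Equivalently, I need $\theta_i\subseteq\ker(p_j\circ g_i^j)$. Since $\theta_i$ is the directed join of the compact congruences of $G_i$ contained in it, it suffices to check this on each such compact $\alpha\subseteq\theta_i$. For such $\alpha$ one has $(\Conc p_i)(\alpha)=0$, hence $\chi_i(\alpha)=0$; by naturality of $\vec\chi$, $\chi_j\bigl((\Conc g_i^j)(\alpha)\bigr)=d_i^j(\chi_i(\alpha))=0$, that is, $\chi_j'\circ(\Conc p_j)\circ(\Conc g_i^j)(\alpha)=0$. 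Because $\chi_j'$ is an isomorphism it separates zero, so $(\Conc(p_j\circ g_i^j))(\alpha)=0$, i.e.\ $\alpha\subseteq\ker(p_j\circ g_i^j)$, as required.

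Next I would check that $\vec B=\famm{G_i/\theta_i,\beta_i^j}{i\le j}$ is a diagram. Composing with the (surjective) $p_i$, one computes
\[
\beta_j^k\circ\beta_i^j\circ p_i=\beta_j^k\circ p_j\circ g_i^j=p_k\circ g_j^k\circ g_i^j=p_k\circ g_i^k=\beta_i^k\circ p_i,
\]
and the right-cancellability of $p_i$ gives $\beta_j^k\circ\beta_i^j=\beta_i^k$; similarly $\beta_i^i=\id$.

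Finally, I would verify that the $\chi_i'$ assemble into a natural isomorphism from $\Conc\circ\vec B$ to $\vec D$ (which is what the statement intends: the factored family). For $i\le j$ in $I$, naturality of $\vec\chi$ combined with the factorization gives
\[
d_i^j\circ\chi_i'\circ\Conc p_i=d_i^j\circ\chi_i=\chi_j\circ\Conc g_i^j=\chi_j'\circ\Conc p_j\circ\Conc g_i^j=\chi_j'\circ\Conc\beta_i^j\circ\Conc p_i.
\]
Since $p_i$ is surjective, so is $\Conc p_i$, hence it is right-cancellable in $\SEM$, giving $d_i^j\circ\chi_i'=\chi_j'\circ\Conc\beta_i^j$. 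Each $\chi_i'$ is already an isomorphism by hypothesis, so $\vec\chi'=(\chi_i')_{i\in I}$ is the desired natural isomorphism. The only delicate point in the argument is step one, and what makes it go through is exactly that $\chi_j'$ separates zero; everything else is diagram chasing modulo the surjectivity of the $p_i$'s.
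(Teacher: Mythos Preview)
Your proposal is correct and follows essentially the same route as the paper's proof: the paper also writes $\chi_i=\tau_i\circ\Conc p_i$ with $\tau_i$ an isomorphism, uses naturality of $\vec\chi$ together with the injectivity of $\tau_j$ to force $(\Conc g_i^j)(\alpha)\subseteq\theta_j$ for compact $\alpha\subseteq\theta_i$, and then verifies that $(\tau_i)_{i\in I}$ is a natural equivalence by a diagram chase equivalent to your cancellation of $\Conc p_i$. The only cosmetic differences are that the paper argues pointwise via $\alpha=\Theta_{G_i}(x,y)$ for $(x,y)\in\theta_i$ rather than via the directed join of compacts, and packages the final naturality check into a commutative diagram rather than an explicit right-cancellation.
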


\begin{proof}
For $i\in I$, we denote by $\tau_i\colon \Conc(G_i/\theta_i)\to D_i$ the isomorphism induced by $\chi_i$, that is, $\tau_i$ is the unique map such that $\chi_i=\tau_i\circ\Conc p_i$. Let $i\le j$ in $I$, let $\alpha\subseteq \theta_i$ be a compact congruence of $B_i$. The following equalities hold
\begin{align*}
(\chi_j\circ\Conc g_i^j)(\alpha)
&=(d_i^j\circ \chi_i)(\alpha), &&\text{as $\vec\chi$ is a natural transformation}\\
&=(d_i^j\circ \tau_i\circ\Conc p_i)(\alpha)\\
&=(d_i^j\circ \tau_i)(\zero_{G_i/\theta_i}),&&\text{as $\alpha\subseteq\theta_i=\ker p_i$}\\
&=0.
\end{align*}
As $\chi_{j}=\tau_{j}\circ\Conc p_{j}$, it follows that $(\Conc p_{j})((\Conc g_i^j)(\alpha))=\zero_{G_j/\theta_j}$, therefore $(\Conc g_i^j)(\alpha)\subseteq\ker p_j=\theta_j$. Let $(x,y)\in\theta_i$. Considering $\alpha=\Theta_{G_i}(x,y)\subseteq\theta_i$, we see that
\[
(g_i^j(x),g_i^j(y))\in (\Conc g_i^j)(\Theta_{G_i}(x,y))\subseteq\theta_j\,.
\]
Therefore $g_i^j$ induces a homomorphism $\beta_i^j\colon G_i/\theta_i\to G_j/\theta_j$. It is easy to check that $\vec B=\famm{G_i/\theta_i,\beta_i^j}{i\le j\text{ in }I}$ is a diagram of algebras. By construction $\beta_i^j\circ p_i=p_j\circ g_i^j$, hence the square (3) of the diagram in Figure~\ref{F:natiso} commutes. As $\vec\chi$ is a natural transformation, the square (1) of the diagram in Figure~\ref{F:natiso} commutes. By definition of $\tau_i$ and $\tau_j$, both triangles (2) and (4) of the diagram in Figure~\ref{F:natiso} commute.

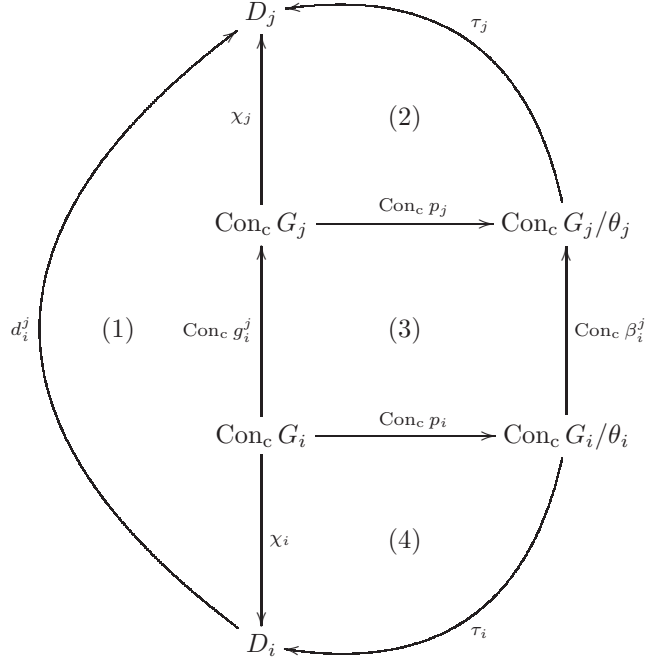
\begin{figure}[htb]
\[
\xymatrix{
&& D_j\\
& & & (2)\\
&& \Conc G_j\ar[uu]^{\chi_j}\ar[rr]^{\Conc p_j} && \Conc G_j/\theta_j\ar@/^-3pc/[uull]_{\tau_j} \\
& (1) & & (3)\\
&& \Conc G_i\ar[uu]^{\Conc g_i^j}\ar[rr]^{\Conc p_i}\ar[dd]^{\chi_i} && \Conc G_i/\theta_i\ar@/^3pc/[ddll]^{\tau_i}\ar[uu]_{\Conc \beta_i^j}\\
& & & (4) \\
&& D_i \ar@/^7pc/[uuuuuu]^{d_i^j}
}
\]
\caption{The family $\vec\tau$ is a natural isomorphism}\label{F:natiso}
\end{figure}

Hence we have proved that the diagram in Figure~\ref{F:natiso} commutes. It follows that $\tau_j\circ(\Conc \beta_i^j)=d_i^j\circ\tau_i$, moreover $\tau_i$ is an isomorphism, for all $i\le j$ in $I$. Therefore, $\vec\tau=\famm{\tau_i}{i\in I}$ defines a natural equivalence from $\Conc\circ\vec B$ to $\vec D$.
\end{proof}

An \emph{$\aleph_0$-lifter} of a poset $P$ is a norm-covering $(U,\partial)$ of $P$, endowed with a set of sharp ideals $\bU\subseteq\Id_s(U,\partial)$ and for which the map~$\partial\colon U\to P$ has a collection of right inverses satisfying certain infinite combinatorial properties (cf. \cite[Section~3-2]{GiWe1}). This property implies, in particular, that~$\partial$ is \emph{surjective}.

\begin{lemma}\label{L:MainTech}
Let~$I$ be a finite lattice, let $\vec A=\famm{A_i,\alpha_i^j}{i\le j\text{ in }I}$ be a diagram in~$\cV$. Let $(U,\bU)$ be an~$\aleph_0$-lifter of~$I$ such that $\card U\le\kappa$. We assume that the following statements hold.
\begin{enumerate}
\item Every element of $\bU$ is a principal ideal.
\item The poset $U$ is lower finite and has a least element.
\item $\partial u\succ 0$ implies that $u\succ 0$ for each $u\in U$.
\item There is an integer $m$ such that~$\card B\le m$ for each algebra $B\in\cW$ with $\Conc B\cong\Conc A_i$ for some $i\in I$.
\item The algebra $A_0$ is generated by one element.
\item The algebra $G$ is locally finite.
\end{enumerate}

Then there exists a lifting $\vec B=\famm{B_i,\beta_i^j}{i\le j\text{ in }I}$ of $\Conc\circ\vec A$ in~$\cW$, such that for all $0\prec i\leq j$ in~$I$, if $\alpha_i^j$ is an isomorphism, then $\beta_i^j$ is an isomorphism.
\end{lemma}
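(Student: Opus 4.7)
The plan is to apply the condensate construction of Section~\ref{S:DefCond} to~$\vec A$ using the norm-covering $(U,\partial)$, invoke the Armature Lemma (cf. \cite[Section~3-2]{GiWe1}) to transfer a lifting to~$\cW$, and use the integer bounds from Lemma~\ref{L:RTTL} to force isomorphisms wherever~$\vec A$ already has them. First, form the condensate $A=\xF(U)\otimes\vec A$ in~$\cV$. Since $\partial 0=0$ (from hypothesis~(2) together with the definition of an $\aleph_0$-lifter), Lemma~\ref{L:CC} yields morphisms $d_0\colon A_0\to A$ and $d_u\colon A_0\times A_{\partial u}\to A$ for $u\in U-\set{0}$ with the compositional identities stated there. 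Hypothesis~(3) ensures every atom of~$I$ arises as $\partial u$ for some atom~$u$ of~$U$, so the strata~$A_i$ with~$i$ an atom of~$I$ are explicitly probed. Apply Lemma~\ref{L:RTTL} to the finite family~$(A_i)_{i\in I}$ (justified by~(4)), using a generator $c_0$ of~$A_0$ supplied by~(5) and setting $c_i=\alpha_0^i(c_0)$ for each $i\in I$ to keep the base points coherent along the diagram: this yields $X\in[\kappa]^{\kappa}$ and integers $(n_i)_{i\in I}$ satisfying the stated bounds.

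Next, construct $f\colon\Fr_\cV(\kappa)\to A$ with ``large range.'' Since $\card U\le\kappa$ and~$U$ is lower finite with least element, split~$X$ into pairwise disjoint $\kappa$-sized pieces~$X_u$ indexed by the elements $u\in U-\set{0}$ whose image $\partial u$ is an atom of~$I$. For each such~$u$, choose (via Lemma~\ref{L:RTTL}(2) applied to~$X_u$) a surjection $t_u\colon X_u\tosurj A_{\partial u}$ attaining $\card\bigl(G/\xi(\ker t_u^{(\partial u)})\bigr)=n_{\partial u}$. Define~$f$ so that each atomic projection $(\pi_u^U\otimes\vec A)\circ f$ agrees with $t_u^{(\partial u)}$ on~$X_u$, by gluing via the $d_u$'s of Lemma~\ref{L:CC} and extending through~$d_0$ on the remaining generators. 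By design, each such atomic composition is surjective and the $\xi$-image of its kernel has index exactly~$n_{\partial u}$ in~$G$.

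Now apply the Armature Lemma to $A$, $\vec A$, $G$, and $(\Conc f)\circ\xi^{-1}\colon\Conc G\to\Conc A$. It produces a diagram $\vec G=\famm{G_i,g_i^j}{i\le j\text{ in }I}$ of subalgebras of~$G$, congruences $\theta_i\in\Con G_i$, and a natural transformation $\vec\chi\colon\Conc\circ\vec G\to\Conc\circ\vec A$ each of whose components factors through $\Conc p_i$ (where $p_i\colon G_i\tosurj G_i/\theta_i$) to an isomorphism. Local finiteness of~$G$ (hypothesis~(6)) makes each~$G_i$ finite, and the saturation built into~$f$ forces $\card(G_i/\theta_i)=n_i$ for every atom~$i$ of~$I$. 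Lemma~\ref{L:natisodiag} then rectifies these data to a lifting diagram $\vec B=\famm{G_i/\theta_i,\beta_i^j}{i\le j\text{ in }I}$ in~$\cW$ with $\Conc\circ\vec B\cong\Conc\circ\vec A$.

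For the isomorphism clause, suppose $0\prec i\le j$ in~$I$ with $\alpha_i^j$ an isomorphism. The coherent choice $c_i=\alpha_0^i(c_0)$ gives $\alpha_i^j(c_i)=\alpha_0^j(c_0)=c_j$, so Lemma~\ref{R:L:RTTL} yields $n_i=n_j$ and hence $\card B_i=\card B_j$. Since $\beta_i^j$ is induced through the natural equivalence by the surjection $\alpha_i^j$, it is itself a surjection between finite algebras of equal cardinality, hence an isomorphism. The main obstacle is the construction of~$f$: it must be generic enough on the atomic strata for the Armature Lemma to yield a lifting, yet have its atomic kernels saturate at exactly~$n_{\partial u}$ so as to force the cardinality match that drives the isomorphism clause. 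The combinatorial conditions~(1)--(3) on~$U$ together with local finiteness of~$G$ are tailored precisely to make this balance achievable.
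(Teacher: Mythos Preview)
Your overall strategy matches the paper's, but there are two genuine gaps.

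First, you partition~$X$ only over those $u\in U-\set{0}$ with $\partial u$ an atom of~$I$. This is not enough. You need a piece~$X_u$ and a surjection $t_u\colon X_u\tosurj A_{\partial u}$ for \emph{every} $u\in U-\set{0}$, so that $(\pi_u^U\otimes\vec A)\circ f$ is surjective for all~$u$. Surjectivity at every~$u$ is what makes the induced map $\Fr_\cV(\kappa)/\theta_u\to A_{\partial u}$ an isomorphism, which is exactly the hypothesis that lets you invoke Lemma~\ref{L:natisodiag} after the Armature Lemma hands you a section $\sigma\colon I\hookrightarrow U$. Equally important, for a non-atomic $j$ and $v=\sigma(j)$, the only way to get the upper bound $\card(G/\xi(\ker((\pi_v^U\otimes\vec A)\circ f)))\le n_j$ is to know that $(\pi_v^U\otimes\vec A)\circ f\res_X$ surjects onto~$A_j$ and then apply clause~(1) of Lemma~\ref{L:RTTL}; with your restricted partition there is no $t_v$ to guarantee this. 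So your assertion ``$\card B_i=\card B_j$'' is unsupported: what one actually proves is $\card B_i=n_i$ (exact value at atoms, via saturation) and $\card B_j\le n_j=n_i$ (upper bound at~$j$, via surjectivity and Lemma~\ref{L:RTTL}(1) together with Lemma~\ref{R:L:RTTL}), hence $\card B_i\ge\card B_j$.

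Second, your final step is backwards. You claim that $\beta_i^j$ is a surjection because $\alpha_i^j$ is; but the natural equivalence lives at the level of~$\Conc$, and surjectivity of a homomorphism is not detected by~$\Conc$. What \emph{is} detected is injectivity: a morphism~$g$ is an embedding if and only if $\Conc g$ separates zero (Section~\ref{S:Basic}). Since $\alpha_i^j$ is an isomorphism, in particular an embedding, $\Conc\alpha_i^j$ separates zero, hence so does $\Conc\beta_i^j$, hence $\beta_i^j$ is an \emph{embedding}. Combined with $\card B_i\ge\card B_j$ from the previous paragraph (and finiteness of the~$B_i$), this forces $\beta_i^j$ to be an isomorphism. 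That is the argument the paper makes, and it hinges precisely on the full partition over $U-\set{0}$ and on the embedding-not-surjection direction.
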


\begin{proof}
{}From the surjectivity of~$\partial$ it follows that~$\partial0=0$.

Denote by~$c_0$ a generator of $A_0$ and put~$c_i=\alpha_0^i(c_0)$ for all $i\in I$. For a function~$t$ from a subset of~$\kappa$ to~$A_i$, we denote by~$t^{(i)}\colon\Fr_{\cV}(\kappa)\to A_i$ the unique homomorphism extending~$t$ and sending each element of $\kappa-\dom(t)$ to~$c_i$. By Lemma~\ref{L:RTTL}, there are $X\in[\kappa]^{\kappa}$ and a family $(n_i)_{i\in I}$ of integers such that the following statements hold:
\begin{enumerate}
\item[$(7)$] Let $i\in I$ and let $t\colon X\tosurj A_i$. Then~$\card\bigl(G/\xi(\ker t^{(i)})\bigr)\le n_i$.
\item[$(8)$] Let~$Y\in[X]^{\kappa}$, let $i\in I$. There exists $t\colon Y\tosurj A_i$, such that~$\card\bigl(G/\xi(\ker t^{(i)})\bigr)= n_i$.
\end{enumerate}
 Moreover, we can assume that~$\kappa-X$ is not empty.

\begin{claim}\label{Cl:1}
There exists a morphism $f\colon \Fr_\cV(\kappa)\to \xF(U)\otimes \vec A$ such that the following statements hold:
\begin{enumerate}
\item[$(9)$] The morphism $(\pi_u^U\otimes \vec A)\circ f$ is surjective, for each $u\in U$.
\item[$(10)$] Let $0\prec u\le v$ in $U$. If $\alpha_{\partial u}^{\partial v}$ is an isomorphism, then
\[\card\bigl(G/\xi(\ker ((\pi_u^U\otimes \vec A)\circ f))\bigr) \ge \card\bigl(G/\xi(\ker ((\pi_v^U\otimes \vec A)\circ f))\bigr).\]
\end{enumerate}
\end{claim}

\begin{cproof}
Set $k_u=\pi_u^U\otimes\vec A$, for each $u\in U$. Put $U^*=U-\set{0}$. Fix morphisms $d_0\colon A_0\to \xF(U)\otimes\vec A$ and $d_u\colon A_0\times A_{\partial u}\to \xF(U)\otimes\vec A$, for $u\in U^*$, as in Lemma~\ref{L:CC} (with $\vec S$ replaced by~$\vec A$). In particular
\begin{align}
k_u\circ d_0 (a)&=\alpha_0^{\partial u}(a),&&\text{for all }u\in U\text{ and all }a\in A_0\,.
\label{E:C1E1}\\
k_v\circ d_u (a,b)&=\alpha_0^{\partial v}(a),&&\text{for all }u\not\le v\text{ in }U\,,\ a\in A_0\,,\text{ and }b\in A_{\partial u}\,.\label{E:C1E2}\\
k_v\circ d_u (a,b)&=\alpha_{\partial u}^{\partial v}(b),&&\text{for all }0<u\le v\text{ in }U\,,\ a\in A_0\,,\ \text{ and }b\in A_{\partial u}\,.\label{E:C1E3}
\end{align}
As~$\card U^*\le\kappa=\card X$, there is a partition $(X_u)_{u\in U^*}$ of~$X$ such that~$\card X_u=\nobreak\kappa$ for each $u\in U$. Put~$X_0=\kappa-X$, so $(X_u)_{u\in U}$ is a partition of~$\kappa$. Denote by $f_0\colon X_0\to \xF(U)\otimes \vec A$, $x\mapsto d_0(c_0)$ the constant map. It follows from~\eqref{E:C1E1} that
\[
k_v(f_0(x))=k_v(d_0(c_0))=\alpha_0^{\partial v}(c_0)=c_{\partial v},\quad\text{for all }x\in X_0\text{ and all }v\in U\,.
\]
Thus the following equality holds.
\begin{equation}\label{E:C1E4}
k_v\circ f_0=c_{\partial v},\text{ the constant map, for all $v\in U$}.
\end{equation}
Let $u\in U^*$. As~$X_u\subseteq X$ and~$\card X_u=\kappa$, it follows from (8) that there exists $t_u\colon X_u\tosurj A_{\partial u}$ such that the following equality holds
 \begin{equation}\label{E:C1E5}
 \card\bigl(G/\xi(\ker t_u^{(\partial u)})\bigr)=n_{\partial u}.
 \end{equation}
Put $f_u\colon X_u\to \xF(U)\otimes \vec A$, $x\mapsto d_u(c_0,t_u(x))$. {}From~\eqref{E:C1E3} we obtain $k_u(f_u(x))=\alpha_{\partial u}^{\partial u}(t_u(x))=t_u(x)$, for all $x\in X_u$. Hence
 \begin{equation}\label{E:C1E6}
 k_u\circ f_u = t_u,\quad \text{for each $u\in U^*$\,.}
 \end{equation}
Similarly, \eqref{E:C1E2} implies that
 \begin{equation}\label{E:C1E7}
 k_v\circ f_u = c_{\partial v},\text{ the constant map, for each $u\not\le v$ in $U$\,.}
 \end{equation}
The family $(X_u)_{u\in U}$ is a partition of~$\kappa$, so there is a (unique) morphism of algebras $f\colon \Fr_\cV(\kappa)\to \xF(U)\otimes \vec A$ that extends $f_u$ for each $u\in U$. Let $u\in U^*$. As $t_u$ is surjective it follows from~\eqref{E:C1E6} that $k_u\circ f$ is surjective. As~$X_0$ is not empty, we see from~\eqref{E:C1E4} that the image of $k_0\circ f$ contains (as an element)~$c_0$, which is a generator of $A_0$, so $k_0\circ f$ is surjective. Therefore, $f$ satisfies (9).

Let $u\succ 0$ in $U$. Let $x\in\kappa$, let $v$ in $U$ be such that $x\in X_v$. If $v\not\le u$, then it follows from \eqref{E:C1E7} that $k_u\circ f(x)=c_{\partial u}= t_u^{(\partial u)}(x)$. If $v<u$, then $v=0$, hence \eqref{E:C1E4} implies that $k_u\circ f(x)=c_{\partial u}=t_u^{(\partial u)}(x)$. If $v=u$, then from \eqref{E:C1E6} we obtain that $k_u\circ f(x)=k_u\circ f_v(x)=t_u(x)=t_u^{(\partial u)}(x)$. Hence $k_u\circ f=t_u^{(\partial u)}$, therefore~\eqref{E:C1E5} implies that
 \begin{equation}\label{E:C1E8}
 \card\bigl(G/\xi(\ker(k_u\circ f))\bigr)=n_{\partial u},
 \quad\text{for each $u\succ 0$ in $U$}.
 \end{equation}
Let $0\prec u\le v$ in $U$ such that $\alpha_{\partial u}^{\partial v}$ is an isomorphism and set $t=k_v\circ f\res_X$. 
It follows from~\eqref{E:C1E6} that the range of~$t$ contains the range of~$t_v$, which is equal to~$A_{\partial v}$; hence, $k_v\circ f\res_X$ is a surjective map from~$X$ onto~$A_{\partial v}$. For $x\in X_0=\kappa-X$ the following equalities hold: $k_v(f(x))=k_v(f_0(x))=c_{\partial v}$ by \eqref{E:C1E4}. It follows that $k_v\circ f=t^{(\partial v)}$. Therefore, by~(7),
 \[
 \card\bigl(G/\xi(\ker(k_v\circ f))\bigr)=\card\bigl(G/\xi(\ker t^{(\partial v)})\bigr)
 \le n_{\partial v}\,.
 \]
However, $n_{\partial v}=n_{\partial u}$ (cf. Lemma~\ref{R:L:RTTL}). Thus, by~\eqref{E:C1E8}, we obtain that\linebreak $\card\bigl(G/\xi(\ker(k_u\circ f))\bigr)=n_{\partial u}\ge \card\bigl(G/\xi(\ker(k_v\circ f))\bigr)$.
\end{cproof}

We fix a morphism $f\colon \Fr_\cV(\kappa)\to \xF(U)\otimes \vec A$ as in Claim~1. Put~$\chi=(\Conc f)\circ\xi^{-1}$. Put $\psi_u = (\pi_u^U\otimes\vec A)\circ f$. By $(9)$, the morphism $\psi_u$ is surjective. Put $\theta_u=\ker \psi_u$ and $\theta_u'=\xi(\theta_u)$. Denote by $h_u\colon \Fr_\cV(\kappa)/\theta_u \to A_{\partial u}$ the morphism induced by $\psi_u$. As $\psi_u$ is surjective, $h_u$ is an isomorphism. In particular,
 \[
 \Conc(G/\theta'_u)\cong\Conc(\Fr_\cV(\kappa)/\theta_u)\cong\Conc A_{\partial u}\,,
 \]
thus, by Assumption~(4), $G/\theta'_u$ is finite.

As~$G$ is locally finite, there is a finite subalgebra~$G_u$ of~$G$ such that~$G_u/\theta_u' = G/\theta_u'$. As~$U$ is lower finite, changing~$G_u$ to the subalgebra of~$G$ generated by $\bigcup_{v\le u}G_v$ makes it possible to assume that~$G_u$ is contained in~$G_v$ for all $u\le v$ in~$U$.

Denote by $\cS$ the category of all \jzs s with \jzh s and put $S=\Conc G$. For all $u\le v$ in~$U$, set $S_u=\Conc G_u$, denote by $g_u\colon G_u\into G$ and $g_u^v\colon G_u\into G_v$ the inclusion maps, and set $\varphi_u^v=\Conc g_u^v$ and $\varphi_u=\Conc g_u$. This defines a diagram $\famm{(S_u,\varphi_u),\varphi_u^v}{u\le v\text{ in }U}$ in~$\cS\dnw S$. We set
 \[
 \rho_u=\Conc(\pi_u^U\otimes \vec A)\circ\chi=(\Conc \psi_u)\circ\xi^{-1}\,,
 \qquad\text{for each }u\in U\,.
 \]
Denote by $p_u\colon G\tosurj G/\theta_u'$ and $p'_u\colon G_u \tosurj G_u/\theta_u'$ the canonical projections, for each $u\in U$. 

\begin{claim}\label{Cl:2}
The map $\rho_u\circ\varphi_u$ factors, through $\Conc p_u'$, to an isomorphism.
\end{claim}

\begin{cproof}
As $\rho_u=(\Conc \psi_u)\circ\xi^{-1}$, the square (3) of the diagram in Figure~\ref{F:rhophiII} commutes.

Denote by $g'_u\colon G_u/\theta_u'\to G/\theta_u'$ the morphism induced by the inclusion map~$g_u$, so the square~(1) of the diagram in Figure~\ref{F:rhophiII} commutes. It follows from the choice of~$G_u$ that $g'_u$ is an isomorphism.

Denote by $\xi'_u\colon \Conc(G/\theta_u') \to\Conc\bigl( \Fr_{\cV}(\kappa) /\theta_u\bigr)$ the \jzh\ induced by $\xi^{-1}$. Then the square~(2) of the diagram in Figure~\ref{F:rhophiII} commutes. As $\theta_u'=\xi(\theta_u)$ and $\xi^{-1}$ is an isomorphism, $\xi_u'$ is an isomorphism.

Denote by $q_u\colon\Fr_{\cV}(\kappa)\tosurj\Fr_{\cV}(\kappa)/\theta_u$ the canonical projection, for each $u\in U$. As $\psi_u=h_u\circ q_u$, the triangle~(4) of the diagram in Figure~\ref{F:rhophiII} commutes.

\begin{figure}[htb]
\[
\xymatrix{
& & \Conc(G/\theta_u') \ar[rr]^{\xi_u'} & &
\Conc\bigl(\Fr_\cV(\kappa)/\theta_u\bigr)\ar@/^5pc/[dddd]^{\Conc h_u}\\
& & & (2) ~\\
\Conc(G_u/\theta_u') \ar[uurr]^{\Conc g_u'} & (1) &\Conc G \ar[uu]_{\Conc p_u}\ar[rr]^{\xi^{-1}} & &\Conc \Fr_\cV(\kappa) \ar[dd]_{\Conc \psi_u} \ar[uu]^{\Conc q_u} & \kern-4pc (4)\\
& & & (3) ~\\
& & \Conc G_u \ar[uull]^{\Conc p'_u} \ar[uu]^{\varphi_u}_{=\Conc g_u} \ar[rr]_{\rho_u\circ\varphi_u} && \Conc A_{\partial u}
}
\]
\caption{The map $\rho_u\circ\varphi_u$ factors through $\Conc p'_u$}\label{F:rhophiII}
\end{figure}
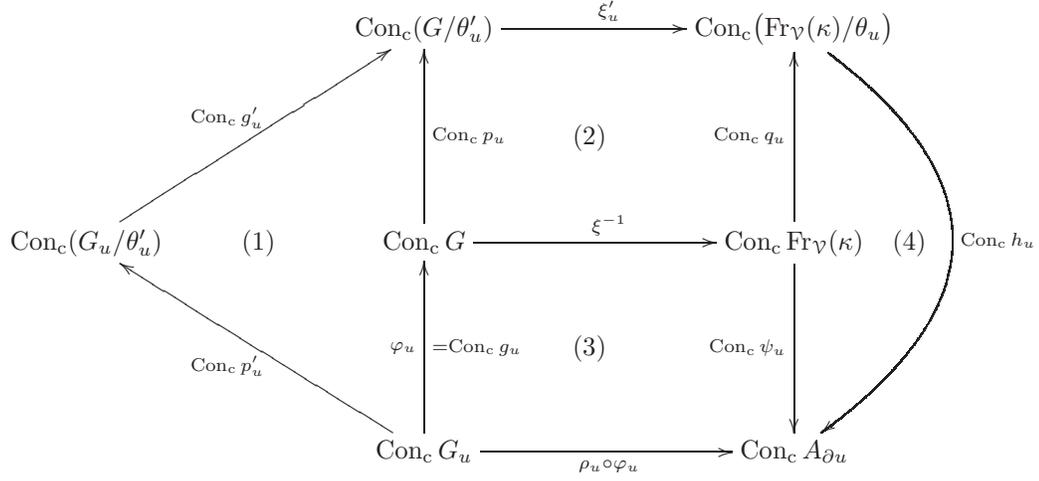

Therefore, the diagram in Figure~\ref{F:rhophiII} commutes, hence
 \[
 \rho_u\circ\varphi_u=(\Conc h_u)\circ \xi_u'\circ(\Conc g'_u)\circ (\Conc p_u')\,.
 \]
As $(\Conc h_u)\circ \xi_u'\circ(\Conc g'_u)$ is an isomorphism, the conclusion follows.
\end{cproof}

As~$G_u$ is finite, $S_u=\Conc G_u$ is finite. By applying the Armature Lemma (cf. \cite[Section~3-2]{GiWe1}) to the functor~$\Conc$ on~$\cV$, we obtain that there is an isotone section $\sigma\colon I\toinj U$ such that the family $\famm{\rho_{\sigma(i)}\circ\varphi_{\sigma(i)}}{i\in I}$ is a natural transformation from $\famm{S_{\sigma(i)},\varphi_{\sigma(i)}^{\sigma(j)}}{i\le j\text{ in }I}$ to $\Conc\circ\vec A$.

It follows from Claim~2 that the map $\rho_{\sigma(i)}\circ\varphi_{\sigma(i)}$ induces an isomorphism $\tau_i\colon \Conc\bigl(G_{\sigma(i)}/\theta_{\sigma(i)}'\bigr)\to\Conc A_i$ for each $i\in I$. Put $B_i=G_{\sigma(i)}/\theta_{\sigma(i)}'$. It follows from Lemma~\ref{L:natisodiag} that $g_{\sigma(i)}^{\sigma(j)}$ induces a morphism $\beta_i^j\colon B_i \to B_j$ for all $i\le j$ in~$I$. This defines a diagram $\vec B=\famm{B_i,\beta_i^j}{i\le j\text{ in }I}$, and $\vec\tau$ is a natural equivalence from $\Conc\circ\vec B$ to $\Conc\circ\vec A$.

Let $0\prec i\le j$ such that $\alpha_i^j$ is an isomorphism. In particular, $\alpha_i^j$ is an embedding, thus $\beta_i^j$ is also an embedding. As $\partial\sigma(i)=i\succ 0$, it follows from Assumption~$(3)$ that $\sigma(i)\succ 0$. Thus, from Assumption~$(10)$ (cf. Claim~\ref{Cl:1}) we obtain that~$\card\bigl(G/\theta_{\sigma(i)}'\bigr)\ge \card\bigl(G/\theta_{\sigma(j)}'\bigr)$. However $B_i=G_{\sigma(i)}/\theta_{\sigma(i)}'=G/\theta_{\sigma(i)}'$, similarly $B_j=G/\theta_{\sigma(j)}'$, so it follows that $\card B_i\ge\card B_j$. However, $\beta_i^j\colon B_i\to B_j$ is an embedding, therefore $\beta_i^j$ is an isomorphism.
\end{proof}

The following lemma is proved in \cite[Lemma~4.11]{G2}. We remind the reader that an \emph{$\aleph_0$-compatible} norm-covering, as defined in~\cite[Definition~4.4]{G2}, is nothing else as an $\aleph_0$-lifter.

\begin{lemma}\label{L:I-compatible-norm-covering}
Let~$X$ be a finite set and set~$\cP=\setm{P\subseteq X}{\card P\le 2\text{ or }P=X}$.
Define~$U$ as the set of all functions from a subset of~$X$ to~$\aleph_2$, partially ordered by inclusion. Let
\begin{align*}
\partial\colon U &\to \cP\\
u &\mapsto \partial u=
\begin{cases}
\dom u &\text{if~$\card (\dom u)\le 2$},\\
X &\text{otherwise}.
\end{cases}
\end{align*}
Denote by $\bU$ the set of all principal ideals of $U$. Then $(U,\bU)$ is an~$\aleph_0$-lifter of~$\cP$. Moreover, $\card U=\aleph_2$.
\end{lemma}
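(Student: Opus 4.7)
The plan is to verify the definition of an $\aleph_0$-lifter in pieces, handling the structural conditions first and leaving the infinite combinatorial core for last.

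First I would check that $(U,\partial)$ is a norm-covering of $\cP$. Isotonicity of $\partial$ is immediate, since $u\subseteq v$ forces $\dom u\subseteq\dom v$, and then one inspects the two cases of the piecewise definition. For support-ness, two elements $u,v\in U$ admit a common upper bound in $U$ if and only if they are \emph{compatible}, i.e., agree on $\dom u\cap\dom v$, in which case $u\cup v$ is their unique minimal upper bound. Consequently, for any finite $Y\subseteq U$, the set $\Sor Y$ is finite (indexed by the maximal compatible subfamilies of $Y$), and $U\Upw Y$ is generated by $\Sor Y$; iterating shows that the $\Sor$-closure of $Y$ remains finite. Surjectivity of $\partial$ is obvious since any partial function of domain $P$ (when $\card P\le 2$), or of domain $X$ otherwise, realizes $P\in\cP$ as $\partial u$.

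Second, every principal ideal $U\dnw\set{u}$ is sharp: isotonicity of $\partial$ makes $\partial u$ the largest element of $\setm{\partial x}{x\le u}$, so $\bU\subseteq\Ids(U,\partial)$. The cardinality count $\card U=\aleph_2$ is immediate since $X$ is finite and $\aleph_2^n=\aleph_2$ for every $n<\omega$.

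Third, the technical heart is to verify the infinite combinatorial property of the $\aleph_0$-lifter definition spelled out in \cite[Section~3-2]{GiWe1}, which requires exhibiting a system of right inverses of $\partial$ enjoying a precise Kuratowski-type free-set behaviour. For this I would appeal to Kuratowski's free-set theorem at $\aleph_2$: for any set-map $F\colon[\aleph_2]^{<\omega}\to[\aleph_2]^{<\omega}$ with $F(Z)\cap Z=\es$, there exists a free subset of $\aleph_2$ of size~$\aleph_2$. One then selects, for each $P\in\cP$, a right inverse $\sigma_P$ of $\partial$ whose values are functions $P\to\aleph_2$ with coordinate-ranges drawn from sufficiently free subsets of $\aleph_2$, ensuring the independence that the lifter condition demands.

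The main obstacle is precisely this last step: the combinatorial property is delicate, and the fact that $\aleph_2$ (rather than $\aleph_1$) is required is exactly the Kuratowski threshold — the same reason that forces the $\aleph_2$ bound in the main theorem. Everything else reduces to direct inspection of the piecewise definition of~$\partial$, and the detailed argument is already carried out in \cite[Lemma~4.11]{G2}.
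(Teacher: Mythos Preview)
The paper does not prove this lemma at all: it simply records that the result is proved in \cite[Lemma~4.11]{G2}. Your proposal ultimately defers to the very same reference, so the approaches coincide; the sketch you add (isotonicity of~$\partial$, compatibility giving unique minimal upper bounds hence supportedness, sharpness of principal ideals, and the Kuratowski free-set argument at~$\aleph_2$ for the combinatorial core) is a correct outline of how that external proof goes and is more than the present paper provides.
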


\begin{remark}\label{R:normcov}
In the context of Lemma~\ref{L:I-compatible-norm-covering}, the poset $U$ is lower finite and has a smallest element. Moreover, $\partial u\succ 0$ implies that $\dom u=\set{x}$ for some $x\in X$, hence $u\succ 0$. Hence the truncated Boolean algebra~$\cP$ has an $\aleph_0$-lifter that satisfies the conditions $(1)$, $(2)$, and $(3)$ of Lemma~\ref{L:MainTech}.
\end{remark}

The following lemma expresses that a diagram of \jzs s with a lifting in~$\cV$ has a lifting in~$\cW$.

\begin{corollary}\label{C:LiftAllDiag}
Assume that~$\kappa=\aleph_2$ and that~$G$ is locally finite. Let~$\vec A$ be a diagram of finite algebras and embeddings in~$\cV$, indexed by a finite \jzs~$I$. Assume that for each algebra~$A$ of the diagram~$\vec A$, there is a finite bound on the cardinality of liftings of $\Conc A$ in~$\cW$. Then the diagram $\Conc\circ\vec A$ is liftable in~$\cW$.
\end{corollary}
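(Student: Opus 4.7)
The plan is to encode the $I$-indexed diagram~$\vec A$ in a $\cP$-indexed diagram~$\vec A'$ over the truncated Boolean algebra $\cP=\setm{P\subseteq I}{\card P\le 2\text{ or }P=I}$, apply Lemma~\ref{L:MainTech} to obtain a lifting~$\vec B'$ of $\Conc\circ\vec A'$ in~$\cW$ via an $\aleph_0$-lifter of cardinality~$\aleph_2$, and then extract the desired lifting of $\Conc\circ\vec A$ from the restriction of~$\vec B'$ to the atoms of~$\cP$.

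Since~$I$ is a finite \jzs, it is a finite lattice, and so is~$\cP$ (with least element~$\es$ and top~$I$). Pick $c\in A_0$, and let~$A'_\es$ denote the (finite, one-generated) subalgebra of~$A_0$ generated by~$c$. Define a $\cP$-indexed diagram $\vec A'=\famm{A'_P,f_P^Q}{P\subseteq Q\text{ in }\cP}$ in~$\cV$ by $A'_P=A_{\vee P}$ and $f_P^Q=\alpha_{\vee P}^{\vee Q}$ for $\es\ne P\subseteq Q$, and set $f_\es^Q$ equal to the restriction of~$\alpha_0^{\vee Q}$ to~$A'_\es$. Every algebra of~$\vec A'$ is finite and every transition morphism is an embedding. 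By Lemma~\ref{L:I-compatible-norm-covering} (with $X=I$) and Remark~\ref{R:normcov}, there is an $\aleph_0$-lifter $(U,\bU)$ of~$\cP$ of cardinality $\aleph_2=\kappa$ satisfying hypotheses~(1)--(3) of Lemma~\ref{L:MainTech}; hypothesis~(5) holds by the choice of~$A'_\es$, (6) by assumption, and~(4) follows from the finite bound assumed on~$\vec A$ together with an analogous bound for the finite algebra~$A'_\es$. Lemma~\ref{L:MainTech} then produces a diagram $\vec B'=\famm{B'_P,\beta_P^Q}{P\subseteq Q\text{ in }\cP}$ in~$\cW$ and a natural equivalence $\vec\tau\colon\Conc\circ\vec B'\to\Conc\circ\vec A'$, with the feature that $\beta_P^Q$ is an isomorphism whenever $\es\prec P\subseteq Q$ and $f_P^Q$ is an isomorphism.

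Extract $\vec B=\famm{B_i,\beta_i^j}{i\le j\text{ in }I}$ as follows. For each $i\in I$, set $B_i=B'_{\set i}$. For $i\le j$ in~$I$, note that $f_{\set j}^{\set{i,j}}=\alpha_j^{i\vee j}=\id_{A_j}$ is an isomorphism, so $\beta_{\set j}^{\set{i,j}}$ is an isomorphism, and set
\[
\beta_i^j=(\beta_{\set j}^{\set{i,j}})^{-1}\circ\beta_{\set i}^{\set{i,j}}\colon B_i\to B_j.
\]
The crux is the compositional identity $\beta_j^k\circ\beta_i^j=\beta_i^k$ for $i\le j\le k$. Composing on the left by~$\beta_{\set k}^I$ and using functoriality of~$\vec B'$ in the form $\beta_{\set k}^I=\beta_{\set{i,k}}^I\circ\beta_{\set k}^{\set{i,k}}$, one finds $\beta_{\set k}^I\circ\beta_i^k=\beta_{\set{i,k}}^I\circ\beta_{\set i}^{\set{i,k}}=\beta_{\set i}^I$; the same argument applied to the pairs $(j,k)$ and then $(i,j)$ gives $\beta_{\set k}^I\circ\beta_j^k\circ\beta_i^j=\beta_{\set j}^I\circ\beta_i^j=\beta_{\set i}^I$. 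Since $f_{\set k}^I=\alpha_k^{\vee I}$ is an embedding and~$\vec\tau$ is a natural equivalence, $\Conc\beta_{\set k}^I$ separates zero, whence $\beta_{\set k}^I$ is an embedding, and the identity follows.

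Finally, naturality of~$\vec\tau$ at the comparable pairs $\set i\subseteq\set{i,j}$ and $\set j\subseteq\set{i,j}$ yields $\tau_{\set j}\circ\Conc\beta_i^j=\Conc\alpha_i^j\circ\tau_{\set i}$, so $(\tau_{\set i})_{i\in I}$ is a natural equivalence from $\Conc\circ\vec B$ to $\Conc\circ\vec A$, providing the required lifting. The main obstacle is exactly this compositional identity: $\Conc$ is not faithful in general, so agreement of $\Conc$-images does not automatically force agreement of morphisms, and the indirect chaining through the top of~$\cP$, exploiting the embedding property of~$\beta_{\set k}^I$, is what promotes the equality to the morphism level.
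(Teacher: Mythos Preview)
Your proof is correct and follows essentially the same route as the paper's: the same $\cP$-reindexing of~$\vec A$, the same invocation of Lemma~\ref{L:I-compatible-norm-covering} and Lemma~\ref{L:MainTech}, the same definition $\beta_i^j=(\beta_{\set j}^{\set{i,j}})^{-1}\circ\beta_{\set i}^{\set{i,j}}$, and the same chaining through~$\beta_{\set k}^I$ (an embedding because $f_{\set k}^I$ is) to verify functoriality. The one place you are slightly more explicit than the paper is in flagging that hypothesis~(4) of Lemma~\ref{L:MainTech} must also cover~$A'_\es$; note that the corollary's stated hypothesis, taken literally, does not supply this bound (since $A'_\es$ need not occur in~$\vec A$), though in the intended application (Theorem~\ref{T:poscritVar}) the stronger assumption on all finite members of~$\cV$ resolves this.
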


\begin{proof}
Write $\vec A=\famm{A_i,\alpha_i^j}{i\le j\text{ in }I}$ and put~$\cP=\setm{P\subseteq I}{\card P\le 2\text{ or }P=I}$. Let us choose~$c\in A_0$, denote by $A'_\emptyset$ the subalgebra of $A_0$ generated by $c$, and put $A'_P=A_{\textstyle{\vee} P}$ for each nonempty $P\in\cP$. Put $f_P^Q=\alpha_{\textstyle{\vee} P}^{\textstyle{\vee} Q}\colon A_P'\to A_Q'$ for all $P\subseteq Q$ in~$\cP$. This defines a diagram $\vec A'=\famm{A_P',f_P^Q}{P\subseteq Q\text{ in }\cP}$.

It follows from Lemma~\ref{L:I-compatible-norm-covering} and Remark~{R:normcov} that there is an~$\aleph_0$-lifter $(U,\bU)$ of~$\cP$, such that $\card U=\aleph_2$ and the following statements hold:

\begin{enumerate}
\item Every element of $\bU$ is a principal ideal.
\item The poset $U$ is lower finite and has a smallest element.
\item $\partial u\succ 0$ implies that $u\succ 0$ for each $u\in U$.
\end{enumerate}
The assumptions of Corollary~\ref{C:LiftAllDiag} imply that:
\begin{enumerate}
\item[$(4)$] There is an integer $m$ such that~$\card B\le m$ for each algebra $B\in\cW$ with $\Conc B\cong\Conc A_i$ for some $i\in I$.
\end{enumerate}

Moreover, by construction, $A'_\emptyset$ is generated by one element and we have assumed that $G$ is locally finite. It follows from Lemma~\ref{L:MainTech} that there exists a lifting $\vec B=\famm{B_P,\beta_P^Q}{P\subseteq Q\text{ in }\cP}$ of $\Conc\circ\vec A'$ in~$\cW$, such that for all $\emptyset\prec P\subseteq Q$ in~$\cP$, if $f_P^Q$ is an isomorphism, then $\beta_P^Q$ is an isomorphism.

Put $C_i=B_{\set{i}}$, for all $i\in I$. Let $i\le j$ in~$I$. Notice that $f_{\set{j}}^{\set{i,j}}=\alpha_j^j=\id_{A_j}$ is an isomorphism, hence $\beta_{\set{j}}^{\set{i,j}}$ is an isomorphism. The map $g_i^j=\left(\beta_{\set{j}}^{\set{i,j}}\right)^{-1}\circ\beta_{\set{i}}^{\set{i,j}}$ is a morphism from~$C_i$ to~$C_j$.

The following equalities hold
\[\beta_{\set{j}}^I\circ g_i^j=\beta_{\set{i,j}}^I\circ \beta_{\set{j}}^{\set{i,j}}\circ \left(\beta_{\set{j}}^{\set{i,j}}\right)^{-1}\circ\beta_{\set{i}}^{\set{i,j}} = \beta_{\set{i,j}}^I\circ\beta_{\set{i}}^{\set{i,j}} =\beta_{\set{i}}^{I}\,.\]
Hence
\begin{equation}\label{E:C:LiftAllDiag}
\beta_{\set{j}}^I\circ g_i^j = \beta_{\set{i}}^I,\quad\text{for all $i\le j$ in~$I$\,.}
\end{equation}
Let $i\le j\le k$, it follows from \eqref{E:C:LiftAllDiag} that
\[
\beta_{\set{k}}^I\circ g_j^k\circ g_i^j=\beta_{\set{j}}^I\circ g_i^j=\beta_{\set{i}}^I=\beta_{\set{k}}^I\circ g_i^k\,.
\]
Moreover, $f_{\set k}^I=\alpha_k^1$ is an embedding, hence $\beta_{\set{k}}^I$ is an embedding, thus $g_j^k\circ g_i^j=g_i^k$. Therefore, $\famm{C_i,g_i^j}{i\le j\text{ in }I}$ is a diagram of algebras in~$\cW$.

Let $\vec\tau=(\tau_P)_{P\in\cP}\colon\Conc\circ\vec A'\to\Conc\circ\vec B$ be a natural equivalence. Let $i\le j$ in~$I$. As $\vec \tau$ is a natural equivalence, $\bigl(\Conc\beta_{\set{j}}^{\set{i,j}}\bigr)\circ\tau_{\set j} = \tau_{\set{i,j}}\circ\bigl(\Conc f_{\set{j}}^{\set{i,j}}\bigr)$. However $f_{\set{j}}^{\set{i,j}}=\alpha_j^j=\id_{A_j}$, hence $\Conc\beta_{\set{j}}^{\set{i,j}}\circ\tau_{\set j}=\tau_{\set{i,j}}$, thus the following equality holds
\begin{equation}\label{E:condtau}
\tau_{\set j}=(\Conc\beta_{\set{j}}^{\set{i,j}})^{-1}\circ\tau_{\set{i,j}}\,.
\end{equation}
Therefore, we obtain
\begin{align*}
(\Conc g_i^j)\circ \tau_{\set i} &= 
\left(\Conc\beta_{\set{j}}^{\set{i,j}}\right)^{-1}\circ(\Conc\beta_{\set{i}}^{\set{i,j}})\circ \tau_{\set i}, &&\text{by the definition of $g_i^j$}\\
&=\left(\Conc\beta_{\set{j}}^{\set{i,j}}\right)^{-1}\circ\tau_{\set{i,j}}\circ(\Conc f_{\set{i}}^{\set{i,j}}), &&\text{as $\vec\tau$ is a natural equivalence}\\
&=\tau_{\set{j}} \circ(\Conc f_{\set i}^{\set{i,j}} ), &&\text{by \eqref{E:condtau}}\\
&=\tau_{\set{j}} \circ(\Conc \alpha_i^j),&&\text{as $f_{\set i}^{\set{i,j}}=\alpha_i^j$.}
\end{align*}
Hence $(\tau_{\set{i}})_{i\in I}$ is a natural equivalence from $\Conc\circ\vec A$ to $\Conc\circ\vec C$.
\end{proof}

\section{Critical points}

We can now prove the main result of this paper.

\begin{theorem}\label{T:poscritVar}
Let~$\cV$ and~$\cW$ be locally finite varieties of algebras. Assume that for each finite algebra $A\in\cV$ there are, up to isomorphism, only finitely many $B\in\cW$ such that $\Conc A\cong\Conc B$, and every such~$B$ is finite. Then either $\crit{\cV}{\cW}\le\aleph_2$ or $\Conc\cV\subseteq\Conc\cW$.
\end{theorem}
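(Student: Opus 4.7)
The plan is to prove the contrapositive: assuming $\crit{\cV}{\cW} > \aleph_2$, I will derive $\Conc\cV \subseteq \Conc\cW$. Local finiteness of $\cV$ gives $\card \Fr_\cV(\aleph_2) = \aleph_2$, hence $\card \Conc \Fr_\cV(\aleph_2) \le \aleph_2$. By the assumption $\crit{\cV}{\cW} > \aleph_2$, this \jzs\ belongs to $\Conc\cW$, so there exist $G \in \cW$ and an isomorphism $\xi \colon \Conc \Fr_\cV(\aleph_2) \to \Conc G$; local finiteness of $\cW$ forces $G$ to be locally finite. This is precisely the setup of Corollary~\ref{C:LiftAllDiag} with $\kappa = \aleph_2$.

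Next I would lift $\Conc \circ \vec A$ in $\cW$ for every diagram $\vec A$ of finite algebras and embeddings of $\cV$. For diagrams indexed by a \emph{finite} \jzs, Corollary~\ref{C:LiftAllDiag} applies directly: the required finite bound on cardinalities of liftings of each $\Conc A_i$ is the maximum, over the finitely many indices, of the maximum cardinality among the finitely many liftings of $\Conc A_i$ in $\cW$, each of which is finite by hypothesis. To extend to diagrams indexed by an arbitrary directed poset, I would invoke the compactness lemma announced in the introduction (the first lemma of \cite[Section~4-9]{GiWe1}): such a diagram is the directed colimit of its restrictions to finite sub-diagrams, each restriction is liftable by the finite case, and the coherence of these liftings assembles into a lifting of the whole.

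Finally, pick any $A \in \cV$. By local finiteness of $\cV$, $A$ is the directed colimit of its finite subalgebras along the inclusion embeddings, giving a directed diagram $\vec A$ of finite algebras and embeddings in $\cV$. The previous step produces a diagram $\vec B$ in $\cW$ with $\Conc \circ \vec B \cong \Conc \circ \vec A$; let $B$ be the directed colimit of $\vec B$ in $\cW$. Since $\Conc$ preserves directed colimits of algebras, $\Conc A$ is isomorphic to the directed colimit of $\Conc \circ \vec A$, and thus to the directed colimit of $\Conc \circ \vec B$, which is $\Conc B$. Hence $\Conc A \in \Conc\cW$, and as $A$ was arbitrary, $\Conc \cV \subseteq \Conc \cW$.

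The genuinely hard work has already been completed in Lemma~\ref{L:MainTech}, which exploits the $\aleph_0$-lifter of the truncated Boolean algebra $\cP$ from Lemma~\ref{L:I-compatible-norm-covering} together with the condensate machinery of Section~\ref{S:DefCond} and the Armature Lemma. At the level of this theorem only the two routine passages remain --- the compactness step from finite-lattice-indexed diagrams to arbitrary ones, and the directed-colimit step from finite algebras to arbitrary algebras of $\cV$ --- and neither introduces a new obstruction.
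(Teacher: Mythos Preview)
Your proposal is correct and follows essentially the same route as the paper's proof: obtain $G$ and $\xi$ from the hypothesis $\crit{\cV}{\cW} > \aleph_2$, apply Corollary~\ref{C:LiftAllDiag} to finite-indexed subdiagrams, extend via the Compactness Lemma of \cite[Section~4-9]{GiWe1}, and finish with the directed-colimit argument. The only refinement the paper adds is fixing an element $a \in A$ and indexing by the finite subalgebras of~$A$ containing~$a$, so that the index poset is genuinely a \jzs\ (with least element the subalgebra generated by~$a$), matching the hypothesis of Corollary~\ref{C:LiftAllDiag}.
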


\begin{proof}
Assume that $\crit{\cV}{\cW}>\aleph_2$. The algebra $\Fr_\cV(\aleph_2)$ is locally finite, so $\card \Fr_\cV(\aleph_2)\le\aleph_2$, hence $\card\Conc \Fr_\cV(\aleph_2)\le\aleph_2$. There are~$G\in\cW$ and an isomorphism $\xi\colon\Conc \Fr_\cV(\aleph_2)\to\Conc G$.

The remaining of the proof is similar to the Dichotomy Theorem of \cite[Section~4-9]{GiWe1}, using Corollary~\ref{C:LiftAllDiag}.

Let $A\in\cV$, let $a\in A$. Denote by $P$ the set of all finite subalgebras of~$A$ containing~$a$. Set $A_p=p$, denote by $\alpha_p^q\colon A_p\to A_q$, and by $\alpha_p\colon A_p\to A$ the inclusion maps, for all $p\le q$ in $P$. Put $\vec A=\famm{A_p,\alpha_p^q}{p\le q\text{ in }P}$. As~$\cV$ is locally finite, the poset~$P$ is a \jzs\ and $\famm{A,\alpha_p}{p\in P}$ is a colimit cocone of~$\vec A$.

Let $I$ be a finite \jz-subsemilattice of $P$, by Corollary~\ref{C:LiftAllDiag} the diagram $\Conc\circ\vec A\res_I$ has a lifting in~$\cW$, it follows from the Compactness Lemma of \cite[Section~4-9]{GiWe1} that $\Conc\circ\vec A$ has a lifting~$\vec B$ in~$\cW$. Fix $B\in\cW$ a colimit of~$\vec B$. The following isomorphisms hold
\begin{align*}
\Conc A&\cong\Conc(\varinjlim\vec A), &&\text{as~$A$ is a colimit of~$\vec A$}\\
&\cong\varinjlim(\Conc\circ\vec A), &&\text{as $\Conc$ preserves directed colimits}\\
&\cong\varinjlim(\Conc\circ\vec B), &&\text{as $\Conc\circ\vec A$ and $\Conc\circ\vec B$ are naturally isomorphic}\\
&\cong\Conc(\varinjlim\vec B), &&\text{as $\Conc$ preserves directed colimits}\\
&\cong\Conc B, &&\text{as $B$ is a colimit of~$\vec B$}.
\end{align*}
Hence $\Conc A$ has a lifting in~$\cW$ for each $A\in\cV$, that is, $\Conc\cV\subseteq\Conc\cW$.
\end{proof}

If~$\cW$ is strongly congruence-proper, then the condition of Theorem~\ref{T:poscritVar} is satisfied, we deduce the following result.

\begin{corollary}\label{C:poscrit}
Let~$\cV$ and~$\cW$ be locally finite varieties of algebras. If~$\cW$ is strongly congruence-proper, then either $\crit{\cV}{\cW}\le\aleph_2$ or $\Conc\cV\subseteq\Conc\cW$.
\end{corollary}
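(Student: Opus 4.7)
The plan is to show that the hypothesis of Corollary~\ref{C:poscrit} directly implies the hypothesis of Theorem~\ref{T:poscritVar}, after which the conclusion is immediate. So the whole proof reduces to a short verification.

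First I would recall the definition of strongly congruence-proper from the introduction: $\cW$ is strongly congruence-proper when every finite \jzs\ has, up to isomorphism, only finitely many liftings in~$\cW$ and every such lifting is finite. Then I would take any finite algebra $A\in\cV$ and observe that $\Conc A$ is finite (because $A$ is finite, so it has only finitely many congruences, hence only finitely many compact ones). Applying the strong congruence-properness of~$\cW$ to the finite \jzs\ $\Conc A$ yields exactly the statement that there are, up to isomorphism, only finitely many $B\in\cW$ with $\Conc B\cong\Conc A$ and every such~$B$ is finite.

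This is precisely the hypothesis on the pair $(\cV,\cW)$ in Theorem~\ref{T:poscritVar}. Since $\cV$ and $\cW$ are both assumed locally finite in the corollary, all the assumptions of Theorem~\ref{T:poscritVar} are satisfied, and its conclusion gives either $\crit{\cV}{\cW}\le\aleph_2$ or $\Conc\cV\subseteq\Conc\cW$, which is exactly what we want.

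There is no genuine obstacle here: the corollary is a packaging statement, extracting a more recognizable sufficient condition (strong congruence-propriety of~$\cW$, a property well-studied via tame congruence theory and satisfied, e.g., by finitely generated congruence-modular varieties) from the slightly more general cardinality/finiteness hypothesis of Theorem~\ref{T:poscritVar}. The only thing to be careful about is that the definition of strongly congruence-proper concerns \emph{all} finite \jzs s (not only those of the form $\Conc A$ with $A\in\cV$), so the implication from strong congruence-propriety to the Theorem~\ref{T:poscritVar} hypothesis is immediate, not conditional.
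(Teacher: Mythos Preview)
Your proposal is correct and matches the paper's own argument exactly: the paper simply notes that if~$\cW$ is strongly congruence-proper then the hypothesis of Theorem~\ref{T:poscritVar} is satisfied, and deduces the corollary. Your additional remark that $\Conc A$ is finite for finite~$A$ makes the verification explicit but changes nothing substantive.
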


By using the results of~\cite{HM}, we observed, in \cite[Section~4-10]{GiWe1}, that a finitely generated variety of algebras that satisfies a nontrivial congruence identity is strongly congruence-proper. Therefore, the following result is a consequence of Corollary~\ref{C:poscrit}.

\begin{corollary}
Let $\cV$ be a locally finite variety of algebras. Let $\cW$ be a finitely generated variety of algebras, with finite similarity type. If~$\cW$ satisfies a nontrivial congruence identity, then either $\crit{\cV}{\cW}\le\aleph_2$ or $\Conc\cV\subseteq\Conc\cW$.
\end{corollary}

\section{Extension to quasivarieties of first-order structures}\label{S:QVar}

We conclude the paper with a word on \emph{quasivarieties of first-order structures}, as considered in~\cite{Gorb}. We briefly recall the basic definitions. A class of first-order structures on a first-order language~$\scL$ is a \emph{quasivariety} if it is closed under substructures, direct products, and directed colimits (within the class of all models for~$\scL$). This notion is quite robust and has many equivalent forms, see~\cite{Gorb} for details. In \cite[Section~4-1]{GiWe1}, we define a \emph{congruence} of a first-order structure~$A$ as an equivalence relation on (the universe of)~$A$, augmented by a family of subsets of finite powers of~$A$ indexed by the set of all relation symbols of~$\scL$, satisfying certain compatibility conditions. (In particular, if there are no relation symbols, then a congruence is an equivalence relation.) This definition is equivalent to the one introduced in~\cite{Gorb}. Congruences of a first-order structure~$A$ are in one-to-one correspondence with surjective homomorphisms with domain~$A$ up to isomorphism. A most important class of examples, extensively considered in~\cite{Gorb}, is given by \emph{graphs}.

Unlike varieties, quasivarieties may not be closed under homomorphic images. Thus the relevant concept of congruence, for a member~$A$ of a quasivariety~$\cV$, is often modified by considering only the \emph{$\cV$-congruences} (or \emph{congruences relative to~$\cV$}): by definition, a congruence~$\theta$ of~$A$ is a $\cV$-congruence if the quotient~$A/\theta$ is a member of~$\cV$. The set~$\ConV A$ of all $\cV$-congruences of~$A$, partially ordered by inclusion, is still an algebraic lattice, and we denote by~$\ConcV A$ its \jzs\ of compact elements. Furthermore, we define $\Concr\cV$ (where the letter ``r'' stands for ``relative'') as the class of all isomorphic copies of $\ConcV A$ for $A\in\cV$.

The \emph{relative critical point} $\critr{\cV}{\cW}$ between quasivarieties~$\cV$ and~$\cW$ of first-order structures is defined as the least cardinality of a member of the difference $(\Concr\cV)-(\Concr\cW)$, if $\Concr\cV\not\subseteq\Concr\cW$; and~$\infty$ otherwise. Our main result, Theorem~\ref{T:poscritVar}, extends \emph{mutatis mutandis} to quasivarieties of first-order structures and relative congruence lattices. Aside from a few additional arguments dealing with the relation symbols, very similar to those dealing with equality for varieties of algebras, the proofs are virtually the same. An important point is the finiteness assumption on the sets of relation symbols in the languages of both~$\cV$ and~$\cW$, which is essentially required in order to ensure that~$\Con A$ is finite whenever (the universe of)~$A$ is finite. We thus improve the estimate of the Dichotomy Theorem stated in \cite[Section~4-9]{GiWe1}, namely $\critr{\cV}{\cW}<\aleph_\omega$, to $\critr{\cV}{\cW}\le\aleph_2$, moreover under a slightly weaker assumption.

\begin{theorem}\label{T:poscritQVar}
Let~$\cV$ and~$\cW$ be locally finite quasivarieties of first-order structures, in first-order languages with only finitely many relation symbols. Assume that for each finite $A\in\cV$ there are, up to isomorphism, only finitely many $B\in\cW$ such that $\ConcV A\cong\ConcW B$, and every such~$B$ is finite. Then either $\critr{\cV}{\cW}\le\aleph_2$ or $\Concr\cV\subseteq\Concr\cW$.
\end{theorem}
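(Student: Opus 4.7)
The plan is to repeat, essentially verbatim, the argument proving Theorem~\ref{T:poscritVar}, systematically replacing the functor~$\Conc$ by $\ConcV$ when applied to members of~$\cV$ and by $\ConcW$ when applied to members of~$\cW$. Assuming $\critr{\cV}{\cW}>\aleph_2$, I would first fix an isomorphism $\xi\colon\ConcV\Fr_\cV(\aleph_2)\to\ConcW G$ with $G\in\cW$ locally finite. Every kernel of an $\scL$-homomorphism with codomain in~$\cV$ (resp., in~$\cW$) is then automatically a $\cV$- (resp., $\cW$-)congruence, because the image of such a homomorphism is a substructure of its codomain, and quasivarieties are closed under substructures; in particular the quotients $G/\theta'_u$ appearing in the analogue of Lemma~\ref{L:MainTech} remain in~$\cW$ as $\theta'_u$ is a $\cW$-congruence. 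This reduces the proof to checking that the infrastructure of Sections~\ref{S:DefCond} and~\ref{S:liftPID} survives the passage to the relative setting.

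Since a quasivariety is closed under substructures, direct products, and directed colimits, the condensate construction (which uses only finite products and directed colimits of the~$S_p$) stays inside~$\cV$, so $\xF(U)\otimes\vec A\in\cV$ and the diagram~$\vec A'$ built in the analogue of Corollary~\ref{C:LiftAllDiag} makes sense. The forgetful functor from~$\cV$ to~$\SET$ still admits a left adjoint, so $\Fr_\cV(\aleph_2)$ exists and, by local finiteness, has universe of cardinality at most~$\aleph_2$. The finiteness of the set of relation symbols is exactly what ensures that, for every first-order structure~$A$ with finite universe, there are only finitely many pairs consisting of an equivalence relation on~$A$ together with compatible subsets of the finite powers of~$A$ indexed by the relation symbols; hence $\Con A$, and a fortiori $\ConV A$ and $\ConcV A$, are finite. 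Combined with local finiteness, this yields that every finitely generated $A_p\in\cV$ is finite with finite relative congruence lattice, as required when invoking the Armature Lemma. The analogues of Lemmas~\ref{L:TTL} and~\ref{L:RTTL} then go through with $\ker t^{(i)}$ read as a $\cV$-congruence and $\xi(\ker t^{(i)})$ as a $\cW$-congruence; the uniform bound~$m$ they invoke is supplied by the hypothesis of Theorem~\ref{T:poscritQVar}.

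The main obstacle, as the author forewarns, is the bookkeeping around $\scL$-homomorphisms, embeddings, and induced maps on quotients in the presence of relation symbols. In particular, in the analogue of Lemma~\ref{L:natisodiag} one must verify that an isotone family of relative congruences $\theta_i\subseteq\ker\psi_i$ arising from a natural transformation of $\ConcV$-diagrams induces $\scL$-homomorphisms on the quotients $G_i/\theta_i$ that preserve all interpreted relations, and that $\ConcV$ (resp., $\ConcW$) sends embeddings of $\scL$-structures to zero-separating \jzh s (the analogue of the remark opening Section~\ref{S:Basic}). Neither check is serious once one keeps track of the augmented notion of congruence, but both must be done explicitly.

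Once these routine verifications are in place, the quasivariety formulation of the compactness lemma stated as ``the first lemma of \cite[Section~4-9]{GiWe1}'', together with the preservation of directed colimits by $\ConcV$ and $\ConcW$ and the local finiteness of~$\cV$, allows us to pass from the finite-lattice-indexed case to arbitrary $A\in\cV$: every $A\in\cV$ is a directed colimit of its finitely generated (hence finite) substructures containing a fixed element, the resulting diagram has a $\cW$-lifting of its $\ConcV$-image by the adapted Corollary~\ref{C:LiftAllDiag}, and a colimit of that lifting realizes $\ConcV A$ as some $\ConcW B$ with $B\in\cW$. This yields $\Concr\cV\subseteq\Concr\cW$, finishing the proof.
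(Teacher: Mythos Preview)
Your proposal is correct and follows precisely the approach the paper itself takes: the paper does not give a formal proof of Theorem~\ref{T:poscritQVar} at all, but only the surrounding paragraph asserting that Theorem~\ref{T:poscritVar} extends \emph{mutatis mutandis}, with the sole nontrivial point being that finiteness of the set of relation symbols guarantees $\Con A$ is finite for finite~$A$. Your outline is in fact more detailed than the paper's treatment---you correctly isolate the places where the passage to relative congruences and to first-order structures requires checking (kernels of maps into~$\cV$ are $\cV$-congruences via closure under substructures; condensates stay in~$\cV$ via closure under products and directed colimits; the augmented notion of congruence in Lemma~\ref{L:natisodiag}; zero-separation for embeddings)---and none of these checks is problematic.
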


It is still not known whether the assumption, stating that for any finite $A\in\cV$ there are only finitely many $B\in\cB$ such that $\ConcV A\cong\ConcW B$, can be dispensed with. Due to the example in~\cite[Exercise~14.9(4)]{HM}, attributed there to C. Shallon, this assumption does not hold as a rule: there is a finitely generated variety of algebras with a proper class of simple members. We also do not know whether the local finiteness assumption, on both quasivarieties~$\cV$ and~$\cW$, can be dispensed with.

\section{Acknowledgment}
I thank Friedrich Wehrung, for many constructive remarks and the help to write this paper.


\begin{thebibliography}{99}

\bibitem{Elli76}
G.\,A. Elliott, \emph{On the classification of inductive limits
of sequences of semisimple finite-dimensional algebras}, J. Algebra
\textbf{38} (1976), 29--44.

\bibitem{FM}
R. Freese and R.\,N. McKenzie,
``Commutator Theory for Congruence Modular Varieties''.
London Mathematical Society Lecture Note Series,~\textbf{125}. Cambridge University
Press, Cambridge, 1987. iv+227 p. ISBN: 0-521-34832-3. Out of print, available online at
\texttt{http://www.math.hawaii.edu/\~{}ralph/Commutator/}\,.

\bibitem{G1}
P. Gillibert, \emph{Critical points of pairs of varieties of algebras},
Internat. J. Algebra Comput.~\textbf{19} (2009), no.~1, 1--40.

\bibitem{G2}
P. Gillibert,
\emph{Critical points between varieties generated by subspace lattices of vector spaces},
J. Pure Appl. Algebra~\textbf{214} (2010), no.~8, 1306--1318.

\bibitem{G4}
P. Gillibert,
\emph{The possible values of critical points between varieties of lattices}, Journal of Algebra \textbf{362} (2012), 30--55.

\bibitem{GiWe1}
P. Gillibert and F. Wehrung,
\emph{{}From objects to diagrams for ranges of functors}, Springer Lecture Notes in Mathematics Vol.~\textbf{2029} (2011), x+158 pages.


\bibitem{GoHa86}
K.\,R. Goodearl and D.\,E. Handelman,
\emph{Tensor products of dimension groups and $K_0$ of unit-regular rings}, Canad. J. Math.~\textbf{38}, no.~3 (1986), 633--658.

\bibitem{Gorb}
V.\,A. Gorbunov,
``Algebraic Theory of Quasivarieties''.
Translated from the Russian. Siberian School of Algebra and Logic. Consultants Bureau, New York, 1998. xii+298~p. ISBN: 0-306-11063-6

\bibitem{LTF}
G. Gr\"atzer,
``Lattice Theory: Foundation''. Birkh\"auser Verlag, Basel,
2011. xxix+613~p. ISBN: 978-3-0348-0017-4

\bibitem{HM}
D. Hobby and R. McKenzie,
\emph{The Structure of Finite Algebras}. 
Contemporary Mathematics, \textbf{76}. American Mathematical Society, Providence, RI, 1988. xii+203~p. ISBN: 0-8218-5073-3.

\bibitem{H1}
A.\,P. Huhn,
\emph{On the representation of distributive algebraic lattices, I},
Acta Sci. Math.~\textbf{45} (Szeged) (1983), 239--246.

\bibitem{H2}
A.\,P. Huhn,
\emph{On the representation of distributive algebraic lattices. II},
Acta Sci. Math.~\textbf{53} (1989), 3--10.

\bibitem{H3}
A.\,P. Huhn,
\emph{On the representation of distributive algebraic lattices. III},
Acta Sci. Math.~\textbf{53} (1989), 11--18.

\bibitem{Kear}
K.\,A. Kearnes,
\emph{Congruence lattices of locally finite algebras}, Algebra Universalis~\textbf{54} (2005), no.~2, 237--248.

\bibitem{Ploscica00} 
M. Plo\v s\v cica,
\emph{Separation properties in congruence lattices of lattices},
Colloq. Math.~\textbf{83} (2000), no. 1, 71--84.

\bibitem{Ploscica03} 
M. Plo\v s\v cica,
\emph{Dual spaces of some congruence lattices},
Topology Appl.~\textbf{131} (2003), no. 1, 1--14.

\bibitem{Ploscica03b} 
M. Plo\v s\v cica,
\emph{Separation in distributive congruence lattices},
Algebra Universalis~\textbf{49} (2003), no. 1, 1--12.

\bibitem{Ploscica04} 
M. Plo\v s\v cica,
\emph{Relative separation in distributive congruence lattices},
Algebra Universalis~\textbf{52} (2004), no. 2-3, 313--323.

\bibitem{Ploscica09} 
M. Plo\v s\v cica,
\emph{Non-representable distributive semilattices},
J. Pure Appl. Algebra~\textbf{212} (2008), no. 11, 2503--2512.

\bibitem{R}
P. R\r{u}\v{z}i\v{c}ka,
\emph{Free trees and the optimal bound in Wehrung's theorem},
Fund. Math. \textbf{198} (2008), no. 3, 217--228.

\bibitem{RTW} 
P. R\r{u}\v{z}i\v{c}ka, J. T\r{u}ma, and F. Wehrung,
\emph{Distributive congruence lattices of congruence-permutable algebras},
J. Algebra \textbf{311} (2007), no.~1, 96--116.

\bibitem{CLPSurv} 
J. T\r{u}ma and F. Wehrung,
\emph{A survey of recent results on congruence lattices of lattices},
Algebra Universalis \textbf{48} (2002), no.~4, 439--471.

\bibitem{NonMeas}
F.~Wehrung, \emph{Non-measurability properties of interpolation vector spaces}, Israel J. Math. \textbf{103} (1998), no.~1, 177--206.

\bibitem{CLP}
F. Wehrung,
\emph{A solution to Dilworth's congruence lattice problem},
Adv. Math.~\textbf{216} (2007), no.~2, 610--625.

\end{thebibliography}
\end{document}